  \def\sm{\smallsetminus}
  \def\aA{{\mathcal A}}
  \def\bB{{\mathcal B}}
  \def\cC{{\mathcal C}}
  \def\eE{{\mathcal E}}
  \def\fF{{\mathcal F}}
  \def\gG{{\mathcal G}}
  \def\lL{{\mathcal L}}
  \def\sS{{\mathcal S}}
  \def\sss{{\mathfrak S}}
  \def\RR{{\mathbb R}}
  \def\ZZ{{\mathbb Z}}
  \def\oo{{\mathfrak o}}
  \def\xxx{{\mathfrak X}}
  \def\yyy{{\mathfrak Y}}
  \def\AO{{\mathrm{AO}}}
  \def\codim{{\mathrm{codim}}}
  \def\Des{{\mathrm{Des}}}
  \def\cDes{{\mathrm{cDes}}}
  \def\sing{{\mathrm{sing}}}
  \def\supp{{\mathrm{supp}}}
  \def\tr{{\mathrm{tr}}}
  \def\TV{{\mathrm{TV}}}
  \theoremstyle{plain}
    \newtheorem{theorem}{Theorem}[section]
    \newtheorem{proposition}[theorem]{Proposition}
    \newtheorem{lemma}[theorem]{Lemma}
    \newtheorem{corollary}[theorem]{Corollary}
  \theoremstyle{definition}
    \newtheorem{example}[theorem]{Example}
    \newtheorem{remark}[theorem]{Remark}
  \numberwithin{equation}{section}
\begin{document}

\title[Functions of hyperplane walks]{Functions of random walks on
hyperplane arrangements}

  \author{Christos~A.~Athanasiadis}
  \address{Department of Mathematics \\
           University of Athens \\
           Panepistimioupolis, Athens 15784 \\
           Greece}
\email{caath@math.uoa.gr}

  \author{Persi~Diaconis}
  \address{Department of Mathematics and Statistics\\
           Stanford University\\
           Stanford, CA 94305}

\date{December 14, 2009; Revised, February 5, 2010}
\subjclass{Primary 60J10; Secondary 52C35}
\keywords{Random walk, hyperplane arrangement, subarrangement, eigenvalues,
mixing rate, Tsetlin library, inverse shuffles, acyclic orientation, descent set}

  \begin{abstract}
    Many seemingly disparate Markov chains are unified when viewed as random
    walks on the set of chambers of a hyperplane arrangement. These include
    the Tsetlin library of theoretical computer science and various shuffling
    schemes. If only selected features of the chains are of interest, then the
    mixing times may change. We study the behavior of hyperplane walks, viewed
    on a subarrangement of a hyperplane arrangement. These include many new
    examples, for instance a random walk on the set of acyclic orientations
    of a graph. All such walks can be treated in a uniform fashion, yielding
    diagonalizable matrices with known eigenvalues, stationary distribution and
    good rates of convergence to stationarity.
  \end{abstract}

  \maketitle

  \section{Introduction}
  \label{sec:intro}

  Many seemingly disparate Markov chains may be successfully studied by
  viewing them as random walks on the set of chambers of a hyperplane
  arrangement \cite{BHR}. These include the Tsetlin library of theoretical
  computer science, a variety of walks on the hypercube and various shuffling
  schemes \cite{BD}. If only selected features of such a Markov chain are of
  interest (for instance, only a few sites on the hypercube or the relative
  ordering of the top few cards), then the mixing time may change. Following
  a suggestion of Uyemura Reyes \cite{Rey}, we study the behavior of hyperplane
  walks, viewed on subarrangements of a given hyperplane arrangement. This
  leads to new Markov chains which permit a full analysis. The following two
  examples illustrate our results and are used as running examples throughout.

  \begin{example}  [Conquering Territory]  \label{ex:grid}
    {\rm Consider an $m \times n$ grid, with each node labeled with $+1$
    or $-1$. At each stage, a node is chosen from a fixed probability
    distribution, then a neighborhood of this node is chosen and finally,
    all labels of the nodes in this neighborhood are changed to $+1$ or
    all are changed to $-1$, according to a specific distribution. As
    explained in Section \ref{sec:appA}, this Markov chain can be viewed as
    a hyperplane walk on the Boolean arrangement. Such walks were first
    studied in \cite{BHR, BD} and include the classical Ehrenfest urn. The
    stationary distribution depends on the various probabilistic specifications
    but the theory of \cite{BHR, BD}, reviewed in Section \ref{sec:back}, gives
    a useful description of this distribution, as well as of the eigenvalues and
    rates of convergence to stationarity.

    Suppose now that only the labels of a few nodes (for instance, the
    four corners or the middle node) are of interest. Common sense suggests
    that the induced process on these nodes may converge to stationarity
    at a faster rate than the entire chain. For example, in the Ehrenfest urn
    with $n$ particles, order of $n \log n$ steps are required to equilibriate
    on the full state space but order of $n$ steps suffice for a few tagged
    particles. Further details and examples appear in the sequel. \qed}
  \end{example}

  Sometimes the induced chain is the object of direct interest, with the
  original chain opaque in the background. This is the case in our second
  example.

  \begin{example} [Acyclic Orientations]  \label{ex:acyclic}
    {\rm Let $\gG$ be a simple undirected graph. A Markov chain on the set
    of acyclic orientations of $\gG$ can be defined as follows: At each stage,
    a node $v$ of $\gG$ is chosen from a fixed probability distribution $w$
    and all edges of $\gG$ incident to $v$ are oriented inward, towards $v$.
    Under some mild assumptions on $\gG$ and $w$, this is an ergodic Markov
    chain on the set of acyclic orientations of $\gG$ with describable
    stationary distribution and eigenvalues and with good control on rates
    of convergence. It arises as the chain induced from the Tsetlin library
    on the braid arrangement, where the subarrangement is the graphical
    arrangement corresponding to $\gG$. It also arises as a walk on the
    Boolean arrangement; see Section \ref{sec:appA} for a detailed
    discussion. \qed}
  \end{example}

  This paper is organized as follows. Section \ref{sec:back} includes background
  on hyperplane walks and functions of a Markov chain, along with an overview of
  the basic examples of hyperplane walks on the Boolean and braid arrangements. Our
  main results appear in Section \ref{sec:main}. Developing a suggestion in
  \cite{Rey}, the process induced from a hyperplane walk on the set of chambers
  of a subarrangement is considered. Although a function of a Markov chain is
  usually not Markov, it is shown that subarrangement processes are Markov
  chains. Moreover, the subarrangement chains are shown to be hyperplane walks
  in their own right. This implies that the whole tool kit of results for
  hyperplane walks \cite{BHR, BD} is available. One striking feature of general
  hyperplane walks is that they have nonnegative real eigenvalues, although
  these chains are almost never symmetric or reversible. A purely combinatorial
  proof of this fact, as well as a new coupling proof of the basic theorem of
  \cite{BD}, giving rates of convergence to stationarity, also appear in Section
  \ref{sec:main}.

  Sections \ref{sec:appA} and \ref{sec:appB} give applications of the general theory
  to the hyperplane walks of Section \ref{sec:back}, treating Examples \ref{ex:grid}
  and \ref{ex:acyclic} in several variations. These include a variety of functions on
  the Tsetlin library and inverse $a$-shuffling Markov chains on the symmetric group,
  such as those assigning to a permutation the set of elements preceding a given entry
  in its linear representation, the descent set and the cyclic descent set. Most of the
  induced chains we study seem very different from their parents. As a byproduct of
  our considerations, we mention an interpretation for the unsigned coefficients of
  the chromatic polynomial of a graph as the multiplicities of the transition matrix
  of a certain natural Markov chain on the set of acyclic orientations of $\gG$
  (see Proposition \ref{prop:a}). Section \ref{sec:semi} briefly discusses
  extensions to random walks on semigroups.

  \section{Background}
  \label{sec:back}

  This section reviews the main results on hyperplane walks, develops needed
  examples (hypercube walks, Tsetlin library, inverse $a$-shuffles) and provides
  background on functions of a Markov chain.

  \subsection{Hyperplane walks}
  \label{sec:hyper}

  Let $\aA$ be a hyperplane arrangement in $V = \RR^n$, meaning a finite set
  of affine hyperplanes in $V$. The \emph{intersection poset} of $\aA$ is the
  set $\lL_\aA = \{ \cap \, \eE: \eE \subseteq \aA\}$, consisting of all
  affine subspaces of $V$ which can be written as intersections of some of
  the hyperplanes of $\aA$, partially ordered by reverse inclusion. The space
  $V$, corresponding to $\eE = \varnothing$, is the minimum element of $\lL_\aA$.

  The connected components of the space obtained from $V$ by removing the union
  of the hyperplanes of $\aA$ are called \emph{chambers}. The \emph{restriction}
  of $\aA$ on an intersection subspace $W \in \lL_\aA$ is the hyperplane
  arrangement in $W$ consisting of the intersections of $W$ with the
  hyperplanes of $\aA$ which are not parallel to (in particular, do not
  contain) $W$. The chambers of all such restricted arrangements are called
  \emph{faces} of $\aA$. Thus the chambers of $\aA$ are exactly its faces of
  dimension $n$. We will denote by $\cC_\aA$ the set of chambers of $\aA$ and
  by $\fF_\aA$ its set of faces. The elements of $\fF_\aA$ are the open cells
  of a regular cell decomposition of $V$; see \cite[Section 2.1]{OMbook}.
  Figure \ref{fig:FC} shows an arrangement of four hyperplanes (lines) in
  $\RR^2$ which has ten chambers, thirteen one-dimensional faces (open line
  segments) and four zero-dimensional faces (points).

  \begin{figure}[h]
  \begin{center}
  \includegraphics[width=2.5in]{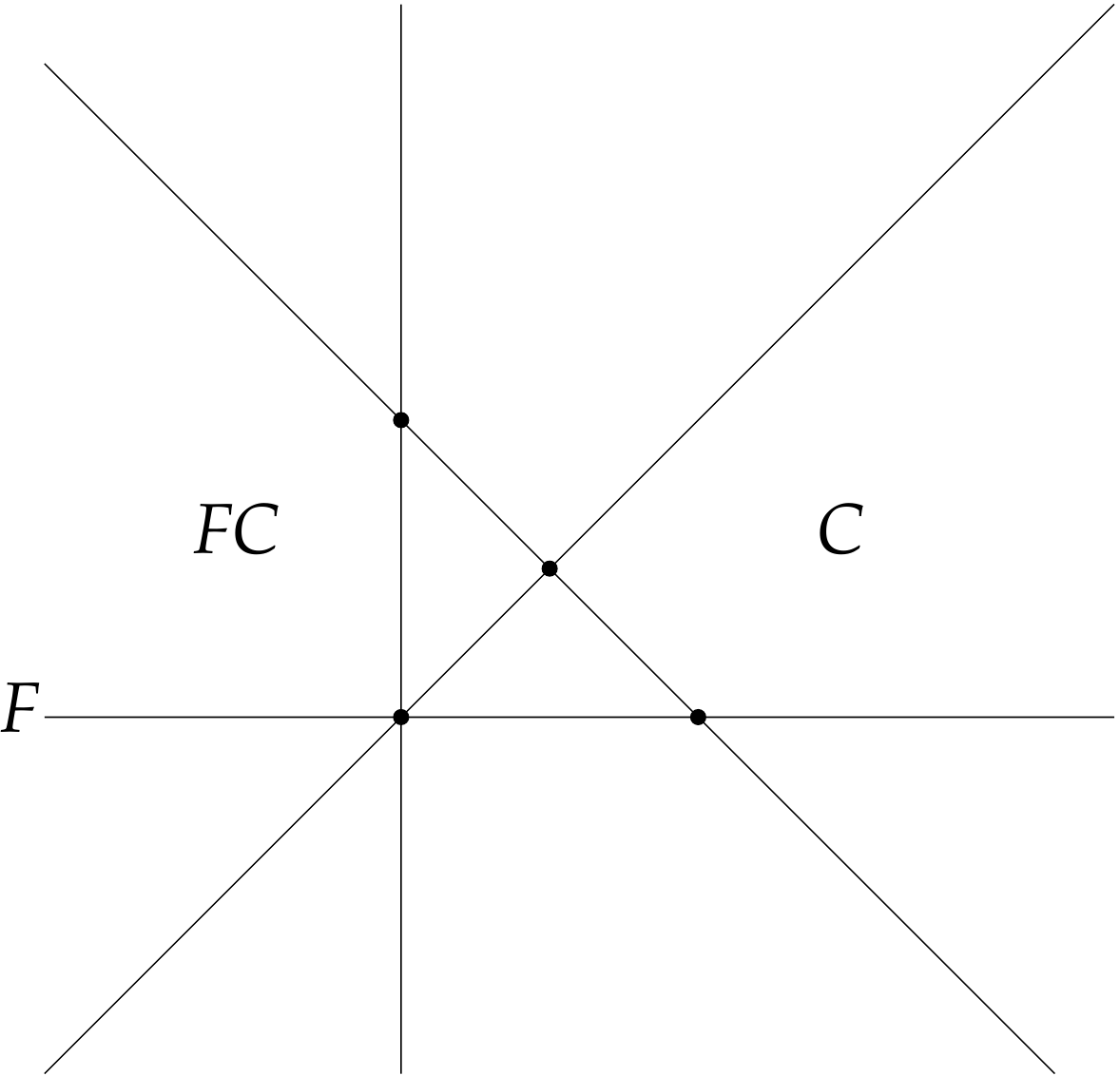}
  \end{center}
  \label{fig:FC}
  \end{figure}

  Given faces $F, G \in \fF_\aA$, we say that $F$ is a face of $G$ if $F$ is
  contained in the closure of $G$, with respect to the standard topology on
  $V$. Given a chamber $C \in \cC_\aA$ and a face $F \in \fF_\aA$, a lemma of
  Tits \cite{Ti} asserts that among all chambers of $\aA$ which have $F$ as
  a face, there is a unique chamber $C'$ which is closest to $C$, in the
  sense that the set of hyperplanes in $\aA$ separating $C'$ from $C$ is
  minimum with respect to inclusion. We will refer to this chamber $C'$ as
  the \emph{projection} of $C$ on $F$ and will denote it by $FC$. For an
  alternative definition, pick any points $x \in F$ and $y \in C$ and move by
  a small distance away from $x$ in the direction of $y$, in the line segment
  joining these two points. The resulting point lies in a well defined face
  of $\aA$, which is the face $FC$. An example is given in Figure \ref{fig:FC}.
  The second recipe can be used to define the face $F G \in \fF_\aA$ for
  any two faces $F, G \in \fF_\aA$.

  Using these ingredients, Bidigare, Hanlon and Rockmore \cite{BHR} suggested
  the following family of Markov chains on the state space $\cC_\aA$. Start
  with a probability measure $w$ on $\fF_\aA$. A step in the chain is given by
  choosing a face $F \in \fF_\aA$ from $w$ and moving from the current chamber
  $C \in \cC_\aA$ to $FC$.  Equivalently, we can describe this chain by
  defining its transition matrix $K$ by the equation
     \begin{equation} \label{eq:BHRdef}
     K(C, C') \ = \ \sum_{F \in \fF_\aA: \, FC=C'} \ w(F).
    \end{equation}
  Henceforth, we will refer to this Markov chain as the \emph{hyperplane walk}
  on $\aA$ (or on $\cC_\aA$) associated to $w$.

  Hyperplane walks being so general, it is surprising that there is a
  relatively complete theory for them. We recall that the \textit{total variation
  distance} between two probability distributions $P$ and $Q$ on a finite set
  $\Omega$ is defined as
    $$ \| P - Q \|_\TV \ = \ \max_{A \subseteq \Omega} \, |P(A) - Q(A)|. $$
  The measure $w$ on $\fF_\aA$ is said to be \emph{separating} \cite{BD} if
  for every $H \in \aA$ there exists a face $F \in \fF_\aA$ such that $F
  \not\subseteq H$ and $w(F) > 0$. We combine the main results of \cite{BHR, BD}
  into the following theorem.

  \begin{theorem} \label{thm:BHR-BD}
    Let $\aA$ be a hyperplane arrangement in $V$ with set of faces $\fF_\aA$
    and intersection poset $\lL_\aA$ and let $w$ be a probability measure on
    $\fF_\aA$.
     \begin{itemize}
        \item[(i)] The characteristic polynomial of $K$ is given by
        $$\det (xI - K) \ = \ \prod_{W \in \lL_\aA} \ (x - \lambda_W)^{m_W},$$
        where
          \begin{equation} \label{eq:lambdaW}
            \lambda_W \ = \ \sum_{F \in \fF_\aA: \, F \subseteq W} \ w(F)
          \end{equation}
        is an eigenvalue,
        $$m_W \ = \ |\mu_\aA (V, W)| \ = \ (-1)^{\codim(W, V)} \mu_\aA(V, W), $$
        $\mu_\aA$ is the M\"obius function of $\lL_\aA$ and $\codim(W, V)$ is
        the codimension of $W$ in $V$.

        \item[(ii)] The matrix $K$ is diagonalizable.
        \item[(iii)] $K$ has a unique stationary distribution $\pi$ if and only
        if $w$ is separating.
        \item[(iv)] Assume that $w$ is separating and let $K_C^l$ be the
        distribution of the chain started from the chamber $C$ after $l$ steps.
        Then its total variation distance from $\pi$ satisfies
          \begin{equation} \label{eq:tot1}
          \| K_C^l - \pi \|_\TV \ \le \ P \{F_1 F_2 \cdots F_l \not\in \cC_\aA\},
          \end{equation}
        where $(F_1, F_2,\dots)$ consists of independent and identically
        distributed picks from the measure $w$ on $\fF_\aA$, and
          \begin{equation} \label{eq:tot2}
          \| K_C^l - \pi \|_\TV \ \le \ \sum_{H \in \aA} \ \lambda_H^l.
          \end{equation}
     \end{itemize}
  \end{theorem}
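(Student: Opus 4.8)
The plan is to establish the four parts by combining the semigroup structure of the face monoid with a careful triangularization argument. First I would recall the key algebraic fact, due to Tits, that $\fF_\aA$ is a left-regular band (LRB): the product of faces satisfies $FF=F$ and $FGF=FG$ for all $F,G\in\fF_\aA$, and moreover $\cC_\aA$ is a two-sided ideal on which every face acts (so $FC\in\cC_\aA$ whenever $C\in\cC_\aA$). Consequently the transition matrix $K$ of \eqref{eq:BHRdef} is the image, in the natural action on $\RR^{\cC_\aA}$, of the element $\sum_{F\in\fF_\aA} w(F)\,F$ of the monoid algebra $\RR[\fF_\aA]$. This reduces the spectral questions (i) and (ii) to understanding the representation theory of the LRB algebra $\RR[\fF_\aA]$.

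For parts (i) and (ii), I would use the support map $\supp:\fF_\aA\to\lL_\aA$ sending a face $F$ to the smallest subspace $W\in\lL_\aA$ containing it; this is a monoid homomorphism onto the meet-semilattice $\lL_\aA$. The fibers of $\supp$ give a filtration of $\RR[\fF_\aA]$ whose associated graded pieces are the algebras $\RR[\supp^{-1}(W)]$, and on each such piece the element $\sum_F w(F)F$ acts by the scalar $\lambda_W=\sum_{F\subseteq W}w(F)$ of \eqref{eq:lambdaW}. Concretely, one orders a basis of $\RR^{\cC_\aA}$ compatibly with a linear extension of $\lL_\aA$ and checks that $K$ becomes block upper-triangular with the scalar $\lambda_W$ appearing on the diagonal with multiplicity equal to the number of chambers "new at level $W$"; a Möbius-inversion count over $\lL_\aA$ identifies this multiplicity as $m_W=|\mu_\aA(V,W)|$, which also recovers that $\sum_W m_W=|\cC_\aA|$ via Zaslavsky's theorem. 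Diagonalizability in (ii) then follows because the minimal polynomial of $K$ has simple roots: one shows directly that $\prod_{W\in\lL_\aA}(K-\lambda_W I)=0$ by checking that the factor $(K-\lambda_W I)$ kills everything supported at level $\le W$, using the semigroup identity $FG=F$ when $\supp(F)\supseteq\supp(G)$.

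For (iii), separating of $w$ guarantees that for every hyperplane $H\in\aA$ some face $F\not\subseteq H$ has positive weight, hence by the LRB structure repeated sampling eventually produces a chamber (the product $F_1F_2\cdots F_l$ stabilizes to a chamber once the $F_i$ collectively fail to be contained in any single hyperplane), so the chain is ergodic with a unique stationary distribution $\pi$; conversely, if $w(F)=0$ for all $F\not\subseteq H$, then $H$ separates the state space into two non-communicating pieces. For (iv), the coupling bound \eqref{eq:tot1} comes from the strong stationary time argument of \cite{BD}: run i.i.d.\ picks $F_1,F_2,\dots$; at the first time $l$ that $F_1F_2\cdots F_l\in\cC_\aA$, the current chamber is exactly $F_1F_2\cdots F_l$ regardless of the starting chamber $C$ (by $(FG)\,C=F(GC)$ and $FC=F$ when $F\in\cC_\aA$), so that time is a strong uniform time and the standard inequality gives $\|K_C^l-\pi\|_\TV\le P\{F_1\cdots F_l\notin\cC_\aA\}$. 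Finally \eqref{eq:tot2} follows by a union bound: $F_1\cdots F_l\notin\cC_\aA$ forces $F_1\cdots F_l\subseteq H$ for some $H\in\aA$, and $P\{F_1\cdots F_l\subseteq H\}\le P\{\text{every }F_i\subseteq H\}=\lambda_H^l$.

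The main obstacle I anticipate is part (i): proving that the triangularization is genuinely block-triangular and that the diagonal multiplicities are exactly the Möbius numbers $m_W$. This requires choosing the ordering of the chamber basis with enough care that the semigroup action respects the $\lL_\aA$-grading, and then executing the Möbius-inversion bookkeeping (equivalently, invoking Whitney's or Zaslavsky's theorem) to turn the raw counts into $|\mu_\aA(V,W)|$. Parts (ii)–(iv) are then comparatively formal given the LRB identities, though (iv) still needs the observation that the stabilization time of the face product is independent of the initial chamber, which is where the left-regularity $FGF=FG$ is used.
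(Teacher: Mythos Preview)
Your route to parts (i) and (iv) is genuinely different from the paper's. For (i) the paper does not triangularize $K$ at all; it proves the trace identity $\tr(K^l)=\sum_{W\in\lL_\aA} m_W\,\lambda_W^l$ for every $l\ge 1$ by direct enumeration, using Zaslavsky's formula to evaluate $\#\{C\in\cC_\aA:FC=C\}$, and then invokes the elementary fact that power-sum traces determine the multiset of eigenvalues. This bypasses entirely the choice of a good basis and the M\"obius bookkeeping you flag as the main obstacle. For (iv) the paper gives a \emph{forward} coupling: two copies of the chain are driven by the same picks $F_1,F_2,\dots$, and one checks that once each hyperplane has been separated by some $F_i$, the two copies lie on the same side of every $H$ and hence coincide thereafter. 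No backward iteration or strong stationary time is used.

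There are two genuine gaps in your sketch. In (iv), the claim that ``the current chamber is exactly $F_1F_2\cdots F_l$'' is false: the state of the chain at time $l$ is $F_lF_{l-1}\cdots F_1C$, not $F_1\cdots F_lC$. What you are reaching for is the backward-iteration argument of \cite{BD}, which uses that $F_1\cdots F_lC$ has the same \emph{distribution} as the chain at time $l$ (by exchangeability of the i.i.d.\ picks), not that it \emph{equals} the chain; as written, your stopping time is not a strong stationary time for the forward process. In (ii), showing $\prod_{W\in\lL_\aA}(K-\lambda_W I)=0$ does not yield a squarefree minimal polynomial, because the $\lambda_W$ need not be distinct; you would have to prove the vanishing of the product over the \emph{distinct} values of $\lambda_W$, which is typically done (as in \cite{Br1,Br2}) by first treating the weights as independent indeterminates and then specializing. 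A smaller point on (i): every chamber has support $V$, so ``ordering a basis of $\RR^{\cC_\aA}$ compatibly with a linear extension of $\lL_\aA$'' gives no nontrivial filtration by itself; the BHR/Brown argument works in the full algebra $\RR[\fF_\aA]$ (or with a carefully constructed non-chamber basis of $\RR^{\cC_\aA}$) before restricting, and your sketch elides this.
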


  Furthermore, there is a useful description of the stationary distribution
  which will not be detailed here; see \cite[Theorem 2 (b)]{BD}. As noticed
  in \cite[Section 4]{BD}, the right-hand side of (\ref{eq:tot1}) is bounded
  from above by that of (\ref{eq:tot2}). The bounds in (\ref{eq:tot1}) and
  (\ref{eq:tot2}) are usually good but not perfect.

  \subsection{Examples}
  \label{sec:ex}

  Our main examples involve the Boolean and braid arrangements. In what
  follows, we denote by $\sss_n$ the symmetric group of permutations of the
  set $[n] := \{1, 2,\dots,n\}$. We will often use the one line notation
  $(\tau(1), \tau(2),\dots,\tau(n))$ for a permutation $\tau \in \sss_n$. It
  will be helpful to think of $\tau$ as a linear ordering of a deck of $n$
  cards, labeled bijectively by the elements of $[n]$.

  \medskip
  \noindent
  \textbf{A. The Boolean arrangement}. Let $\aA = \{H_1, H_2,\dots,H_n\}$ be
  the \emph{Boolean arrangement} in $V = \RR^n$, where $H_i$ is the coordinate
  hyperplane in $\RR^n$ defined by the equation $x_i = 0$, for $1 \le i \le
  n$. The intersection poset $\lL_\aA$ is isomorphic to the lattice of
  subsets of the set $[n]$, ordered by inclusion, where the isomorphism maps
  a subspace $W \in \lL_\aA$ to the set of indices $i \in [n]$ for which $W
  \subseteq H_i$. The M\"obius function $\mu_\aA$ of this poset satisfies
  $\mu_\aA (V, W) = (-1)^{\codim(W, V)}$ for $W \in \lL_\aA$.

  The set $\fF_\aA$ of faces of $\aA$ is in bijection with the set $\{-, 0,
  +\}^n$ of all $3^n$ possible sign vectors of length $n$ (where the bijection
  is induced by the map assigning to each point $x \in \RR^n$ the sequence of
  signs of the coordinates of $x$). The chambers of $\aA$ are the orthants in
  $\RR^n$; they correspond to the $2^n$ elements of $\{-, +\}^n$. Using these
  bijections, we may identify $\cC_\aA$ and $\fF_\aA$ with the sets $\{-, +\}^n$
  and $\{-, 0, +\}^n$, respectively (the former may also be identified with
  the set of vertices of the $n$-dimensional cube $[-1, 1]^n$).
  The projection $FC$ of a chamber $C \in \cC_\aA$ on a face $F \in \fF_\aA$
  is the chamber whose sign vector is obtained from that of $F$ by switching
  every zero coordinate to the corresponding coordinate of $C$. Thus, given a
  probability measure $w$ on $\fF_\aA$, the hyperplane walk on $\aA$
  associated to $w$ proceeds from the current chamber $C \in \cC_\aA$ by
  selecting a face $F \in \fF_\aA$ with probability $w(F)$ and replacing the
  $i$th coordinate of $C$ by the $i$th coordinate of $F$, whenever the latter
  is nonzero, to reach the next chamber in the walk. Some examples of these
  walks appear in \cite[Section 2.3]{BHR} and \cite[Section 3B]{BD}.

  It follows from Theorem \ref{thm:BHR-BD} that the transition matrix $K$ for
  this chain has eigenvalues
    \begin{equation} \label{eq:lambdaB}
      \lambda_S \ = \ \sum_{F \in \fF_S} \ w(F),
    \end{equation}
  one for each $S \subseteq [n]$, where $\fF_S$ denotes the set of faces of $\aA$
  whose sign vectors have their $i$th coordinate equal to zero for every $i
  \in S$, and that $K$ has a unique stationary distribution $\pi$ if and only if
  for every $1 \le i \le n$ there exists a face $F \in \fF_\aA$ with nonzero
  $i$th coordinate, such that $w(F) > 0$.

  The eigenvalues which correspond to the hyperplanes $H_i$ and which appear
  in the right-hand side of (\ref{eq:tot2}), are the numbers $\lambda_{\{i\}}$
  for $1 \le i \le n$.

  \begin{example} \label{ex:nearest}
  {\rm For $\varepsilon \in \{-, +\}$ and $1 \le i \le n$, we denote by
  $E_i^\varepsilon$ the face of $\aA$ whose sign vector has its $i$th
  coordinate equal to $\varepsilon$ and all other coordinates equal to zero.
  We choose face weights so that $w(E_i^\varepsilon) = w_i^\varepsilon$ for
  $\varepsilon \in \{-, +\}$ and $1 \le i \le n$, and $w(F) = 0$ for all other
  faces $F \in \fF_\aA$, where the $w_i^\varepsilon$ are nonnegative real
  numbers summing to 1.
  The resulting Markov chain is the nearest neighbor random walk on the vertex
  set $\{-, +\}^n$ of the $n$-dimensional cube, which evolves by picking a
  coordinate $i$, at each stage, and switching it to $\varepsilon$, with
  probability $w_i^\varepsilon$. Thus, the transition matrix $K$ for this chain
  is given by
    $$K (x, x') \ = \ \begin{cases} \sum_{i=1}^n \, w_i^{\varepsilon_i}, & \text{if \
                      $x'=x$, }\\
                      w_i^{-\varepsilon_i}, & \text{if \ $x'$ is
                      obtained from $x$ by switching the $i$th coordinate to
                      $-\varepsilon_i$, }\\
                     0, & \text{otherwise} \end{cases} $$
  for $x, x' \in \{-, +\}^n$ with $x = (\varepsilon_1,
  \varepsilon_2,\dots,\varepsilon_n)$. It has eigenvalues
    $$ \lambda_S \ = \ \sum_{i \in [n] \sm S} \ w_i, $$
  one for each $S \subseteq [n]$, where $w_i = w^-_i + w^+_i$. Moreover, $K$
  has a unique stationary distribution $\pi$ if and only if $w_i>0$ holds for
  every $1 \le i \le n$. In that case, $\pi$ is given by the formula
     \begin{equation} \label{eq:pi-nearest}
       \pi(x) \ = \ \prod_{i=1}^n \ \frac{w_i^{\varepsilon_i}}{w_i}
     \end{equation}
  for $x = (\varepsilon_1, \varepsilon_2,\dots,\varepsilon_n) \in \{-, +\}^n$
  (see, for instance, \cite[Section 3B]{BD}) and (\ref{eq:tot2}) gives the
  bound
    \begin{equation} \label{eq:tot-nearest}
      \| K_x^l - \pi \|_\TV \ \le \ \sum_{i=1}^n \ (1 - w_i)^l,
    \end{equation}
  where $K_x^l$ is the distribution of the chain started from $x$ after $l$
  steps. \qed}
  \end{example}

  \medskip
  \noindent
  \textbf{B. The braid arrangement}. Let $\aA$ be the \emph{braid arrangement}
  in $\RR^n$, consisting of the ${n \choose 2}$ hyperplanes defined by
  the equations $x_i - x_j = 0$ for $1 \le i < j \le n$. The intersection
  poset $\lL_\aA$ is isomorphic to the lattice of partitions of the set $[n]$,
  ordered by refinement. The isomorphism maps a subspace $W \in \lL_\aA$ to
  the partition of $[n]$ in which $i$ and $j$ are in the same block if and
  only if $x_i = x_j$ holds for every point $(x_1, x_2,\dots,x_n) \in W$.

  The set $\cC_\aA$ of chambers of $\aA$ is in bijection with $\sss_n$ and the
  set $\fF_\aA$ of faces is in bijection with the set
  of ordered partitions of $[n]$, meaning set partitions $(B_1, B_2,\dots,B_k)$
  of $[n]$ in which the order of the blocks matters. To be specific, let
  us agree that the permutation $\tau \in \sss_n$ corresponds to the chamber
    $$ x_{\tau(1)} > x_{\tau(2)} > \cdots > x_{\tau(n)}. $$
  More generally, the ordered partition $B = (B_1, B_2,\dots,B_k)$ of
  $[n]$ corresponds to the face of $\aA$ defined by the equalities $x_i
  = x_j$, whenever $i$ and $j$ lie in the same block of $B$, and the
  inequalities $x_i > x_j$, whenever the block of $B$ which contains
  $i$ has smaller index than the block of $B$ which contains $j$.

  The action of faces on chambers can be described as follows. Let $C
  \in \cC_\aA$ be the chamber corresponding to $\tau \in \sss_n$ and $F
  \in \fF_\aA$ be the face corresponding to the ordered partition $B$ of
  $[n]$. One can then check that $FC$ is the chamber which corresponds
  to the permutation of $[n]$ which is obtained from $B$ by listing the
  elements of each block of $B$ in the relative order in which they appear
  in $\tau$. For instance, if $n=9$,
  $\tau = (8, 1, 4, 9, 7, 2, 6, 3, 5)$ and $B = (\{6, 9\}, \{1, 3, 7\},
  \{4\}, \{2, 5, 8\})$, then the resulting permutation is equal to $(9,
  6, 1, 7, 3, 4, 8, 2, 5)$. In the sequel, we identify faces
  (respectively, chambers) of $\aA$ with the corresponding ordered
  partitions (respectively, permutations) of the set $[n]$.

  Different choices of probability measure on $\fF_\aA$ lead to various
  interesting Markov chains on $\sss_n$. We concentrate on the following
  two examples.

  \begin{example} [Tsetlin Library]  \label{ex:tsetlin}
    {\rm Let $w_1, w_2,\dots,w_n$ be nonnegative real numbers summing to 1.
    Choose face weights so that
      $$ w(B) \ = \ \begin{cases} w_i, & \text{if \ $B=(\{i\}, [n] \sm \{i\})$
                    for some $1 \le i \le n$} \\
                    0, & \text{otherwise} \end{cases}$$
    for an ordered partition $B$ of $[n]$. The projection of $\tau$ on $B =
    (\{i\}, [n] \sm \{i\})$ removes the entry $i$ in the one line notation of the
    permutation $\tau$ and places it in front. Hence the transition matrix $K$ is
    the $n! \times n!$ matrix defined by
      $$K(\tau, \tau') \ = \ \begin{cases} w_i, & \text{if \ $\tau'$ is
         obtained from $\tau$ by moving $i$ in front, for some $i$}\\
                    0, & \text{otherwise}. \end{cases}$$
    This chain has been extensively studied as a model of dynamic storage allocation;
    see \cite{DF} for a survey and \cite{BHP} for recent results. It was shown by
    Phatarfod \cite{Ph}, and follows easily from Theorem \ref{thm:BHR-BD} (see \cite{BHR,
    BD}), that $K$ is diagonalizable with eigenvalues
      \begin{equation} \label{eq:phat}
        \lambda_\tau \ = \ \sum_{\tau(i)=i} \ w_i,
      \end{equation}
    one for each $\tau \in \sss_n$. Moreover, $K$ has a unique stationary
    distribution $\pi$ if and only if we have $w_i=0$ for at most one index $i$.
    In that case, $\pi$ is given by ``sampling the weights without replacement
    to generate a random permutation". Thus we have
      \begin{equation} \label{eq:luce}
        \pi(\tau) \ = \ \frac{w_{\tau(1)}w_{\tau(2)} \cdots w_{\tau(n-1)}}
         {(1 - w_{\tau(1)}) (1 - w_{\tau(1)} - w_{\tau(2)}) \cdots
         (1 - w_{\tau(1)} - \cdots - w_{\tau(n-2)}) }
      \end{equation}
    for $\tau \in \sss_n$. This stationary distribution is known as the
    \emph{Luce model} in cognitive psychology; see \cite[p.~174]{Di} for extensive
    references. Equation (\ref{eq:tot2}) of Theorem \ref{thm:BHR-BD} (iv) gives
    the bound
      \begin{equation} \label{eq:tse1}
        \| K_\tau^l - \pi \|_\TV \ \le \ \sum_{1 \le i < j \le n} \ (1 - w_i
         - w_j)^l
      \end{equation}
    on the rate of convergence to stationarity, where $K_\tau^l$ is the distribution
    of the chain started at $\tau$ after $l$ steps. Suppose, for instance, that
    $w_i =1/n$ for $1 \le i \le n$, so that at each stage of the chain, an entry
    of the current permutation is selected uniformly at random and moved in front
    (thus the chain evolves by the ``random to top" rule). Then we have
      \begin{equation} \label{eq:tse2}
        \| K_\tau^l - \pi \|_\TV \ \le \ {n \choose 2} \left(1 - \frac{2}{n}
         \right)^l.
      \end{equation}
    The expression on the right is bounded above by $e^{-2c}/2$ if $l \ge n (\log n
    +c)$, for $c>0$. In this case there is a matching lower bound which shows that
    $n (\log n + c)$ steps are in fact necessary and sufficient for convergence to
    stationarity; see \cite{DFP} for further details and more refined asymptotics.
    \qed}
  \end{example}

  \begin{example} [Inverse $a$-shuffles]  \label{ex:a}
    {\rm Ordinary riffle shuffles have received a careful analysis in \cite{BaD}.
    A key to this analysis is a natural model on inverse riffle shuffles. Informally,
    begin with a deck of cards in order. Label the back of each card by one of the
    numbers in $\{1, 2,\dots,a\}$, choosing the labels uniformly and independently.
    Then remove all cards labeled 1, keeping them in the same relative order, and
    start a new deck. Remove the cards labeled 2, keeping them in the same relative
    order, and place them below the ones labeled 1. Continue, placing the cards
    labeled $a$ at the bottom. This can be seen as a random walk on the braid
    arrangement.

    More formally, let $a \ge 2$ be an integer and $\aA$ be the braid arrangement in
    $\RR^n$, as before. A \textit{weak ordered partition} of $[n]$ is a sequence of
    pairwise disjoint sets (called blocks) whose union is equal to $[n]$. From such
    a sequence one gets an ordered partition of $[n]$ by simply removing the empty
    blocks. We define a probability measure $w$ on $\fF_\aA$ by first assigning
    weight equal to $1/a^n$ to each of the $a^n$ weak ordered partitions $(B_1,
    B_2,\dots,B_a)$ of $[n]$ with $a$ blocks and then letting $w(B)$ equal the sum
    of the weights of all weak ordered partitions of $[n]$ with $a$ blocks which
    correspond to the ordered partition $B$. For instance, if $a=2$, then
      $$ w(B) \ = \ \begin{cases} 1/2^{n-1}, & \text{if \ $B = ([n])$} \\
      1/2^n, & \text{if \ $B = (s, [n] \sm s)$ and
                                   $s \ne \varnothing, \, s \ne [n]$}\\
                                   0, & \text{otherwise} \end{cases}$$
    for an ordered partition $B$ of $[n]$.

    The resulting chain on $\sss_n$ proceeds from a given permutation $\tau$ by
    selecting uniformly at random a weak ordered
    partition $(B_1, B_2,\dots,B_a)$ of $[n]$ with $a$ blocks and listing the
    elements of each block $B_j$ in the relative order in which they appear in
    $\tau$, to reach a new permutation $\tau'$ (such a permutation is said to be
    obtained from $\tau$ by an \emph{inverse $a$-shuffle}). Equivalently, the
    transition matrix $K$ of the chain satisfies
      $$ K(\tau, \tau') \ = \ \frac{\nu (\tau, \tau') }{a^n},$$
    where $\nu (\tau, \tau')$ is the number of weak ordered partitions of $[n]$
    with $a$ blocks, the projection of $\tau$ on which is equal to $\tau'$.

    Let $W \in \lL_\aA$ be an intersection subspace of codimension $k = \codim(W,
    V)$ and let $\sigma$ be the corresponding partition of $[n]$, so that the
    number of blocks of $\sigma$ is equal to $n-k$. Then the right-hand side of
    (\ref{eq:lambdaW}) is equal to the probability that the following holds for
    a random weak ordered partition $B$ of $[n]$ with $a$ blocks: for every pair
    $\{i, j\}$ of elements of $[n]$ belonging to the same block of $\sigma$, the
    elements $i$ and $j$ also belong to the same block of $B$. This probability
    is clearly equal to $1/a^k$ and hence
      \begin{equation} \label{eq:1/a}
        \lambda_W \ = \ 1/a^{\codim(W, V)}.
      \end{equation}
    Thus it follows easily from Theorem \ref{thm:BHR-BD} (i) (see \cite[Equation
    (31)]{BHR}) that the distinct eigenvalues of $K$ are $1, 1/a,
    1/a^2,\dots,1/a^{n-1}$ and that the multiplicity of the eigenvalue $1/a^i$ is
    equal to the number of permutations in $\sss_n$ which have exactly $n-i$ cycles.
    The stationary distribution $\pi$ in this case is the uniform distribution
    on $\sss_n$ and (\ref{eq:tot2}) gives the bound
      $$ \| K_\tau^l - \pi \|_\TV \ \le \ {n \choose 2} \left( \frac{1}{a}
         \right)^l. $$
    The expression on the right is bounded above by $a^{-c}/2$ if $l \ge 2\log_a n
    +c$ and $c>0$. In fact $(3/2) \log_a n +c$ steps are necessary and sufficient
    for convergence to uniformity; see \cite{BaD} for further details and
    asymptotics.

    The chain of inverse $a$-shuffles converges to the uniform distribution at
    precisely the same rate as the chain of ordinary riffle shuffles on $\sss_n$.
    Thinking of the elements of $\sss_n$ as linear orderings of a deck of $n$
    cards, this chain proceeds from a given ordering as follows. The deck is cut
    into $a$ (possibly empty) packets according to the multinomial distribution
    on their sizes. Then all $a$ packets are riffled together, each time dropping
    a card from one of the $a$ packets with probability proportional to its
    size, to get to a new ordering of the deck. For more information and
    extensive discussions, see \cite{BaD, Di, Ma}. \qed}
  \end{example}

  Examples \ref{ex:tsetlin} and \ref{ex:a} are two of the most interesting cases of
  general hyperplane walks. Other hyperplane arrangements for which the chambers
  are indexed by familiar combinatorial objects and the associated Markov chain
  has a reasonably down to earth description appear in \cite[Section 3]{BD}.
  Further examples where the probabilistic analysis remains to be done can be
  found in \cite{Ath2, PS, Sta2} and \cite[Lecture 5]{Sta3}.

  \subsection{Functions of a Markov chain}
  \label{sec:functions}

  Let $X_0, X_1, X_2,\dots$ be the successive outcomes of a Markov chain on a
  finite state space $\xxx$. Consider a finite set $\yyy$ and a surjective map
  $f: \xxx \to \yyy$. We may think of $\yyy$ as a set partition
  of $\xxx$ and of the map $f$ as the canonical surjection. Thus $f(x)$ is equal to
  the unique block of $\yyy$ which contains $x$, for every $x \in \xxx$. We set
  $Y_i = f(X_i)$ for each index $i$ and refer to $(Y_i)$ as the stochastic process
  (or chain) on the state space $\yyy$ which is \emph{induced from $(X_i)$} by
  the map $f$.

  A function of a Markov chain is usually not Markov. The following lemma
  gives a necessary and sufficient condition for Markovianity in the situation
  described above. We refer the reader to \cite[Sections 6.3-6.4]{KS} for a good
  elementary treatment. For a more sophisticated treatment and references, see
  \cite{RP}.

  \begin{lemma} [Dynkin's Criterion]  \label{lem:markov}
    Let $(X_i)_{\ge 0}$ be a Markov chain on a finite state space $\mathfrak X$
    and let $\yyy$ be a partition of $\xxx$. The chain induced by the canonical
    surjection $f: \xxx \to \yyy$ is Markov for all starting distributions for
    $X_0$ if and only if for any two distinct blocks $B_0, B_1 \in \yyy$, the
    probability $P (X_1 \in B_1~|~X_0 = x_0)$ is constant in $x_0 \in B_0$.
  \end{lemma}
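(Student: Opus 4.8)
The plan is to prove both directions of Dynkin's Criterion directly from the definition of the Markov property, exploiting the fact that the conditional transition probability into a block can be computed block-by-block.

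For the easier direction, suppose that for any two distinct blocks $B_0, B_1 \in \yyy$ the quantity $P(X_1 \in B_1 \mid X_0 = x_0)$ does not depend on the choice of $x_0 \in B_0$; call the common value $\bar K(B_0, B_1)$. I would then show that $(Y_i)$ is a Markov chain with transition matrix $\bar K$ by verifying that
$$ P(Y_{m+1} = B_{m+1} \mid Y_0 = B_0, \dots, Y_m = B_m) \ = \ \bar K(B_m, B_{m+1}) $$
for any admissible history. First I would condition on $X_m$: writing the left-hand side as a weighted average of $P(X_{m+1} \in B_{m+1} \mid X_m = x_m)$ over the conditional distribution of $X_m$ given the $Y$-history (and using the Markov property of $(X_i)$ to discard the earlier $X$-history), and then observing that this conditional distribution of $X_m$ is supported on the block $B_m$, so every term in the average equals $\bar K(B_m, B_{m+1})$. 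Hence the average is $\bar K(B_m, B_{m+1})$, which depends only on $B_m$ and $B_{m+1}$, establishing the Markov property for all starting distributions.

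For the converse, suppose $(Y_i)$ is Markov for every starting distribution of $X_0$. Fix distinct blocks $B_0, B_1$ and two states $x_0, x_0' \in B_0$. Starting $X_0$ deterministically at $x_0$ gives $Y_0 = B_0$ almost surely and
$$ P(Y_1 = B_1 \mid Y_0 = B_0) \ = \ P(X_1 \in B_1 \mid X_0 = x_0); $$
starting instead at $x_0'$ gives the same left-hand side (it is a one-step transition probability of the chain $(Y_i)$, hence independent of where inside $B_0$ we started), while the right-hand side becomes $P(X_1 \in B_1 \mid X_0 = x_0')$. Therefore the two are equal, which is exactly the asserted constancy. (A small point: one should note that ``$P(Y_1 = B_1 \mid Y_0 = B_0)$'' is a genuinely well-defined number here because, under a starting measure concentrated on a single state of $B_0$, the event $\{Y_0 = B_0\}$ has probability one; alternatively, one may simply invoke that the Markov transition matrix of $(Y_i)$ is the same for all starting distributions.)

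The main subtlety — really the only one — is bookkeeping the conditioning carefully in the forward direction: one must justify, using the Markov property of $(X_i)$, that conditioning on the full past $(Y_0, \dots, Y_m)$ and then on $X_m$ collapses to conditioning on $X_m$ alone, and that the resulting conditional law of $X_m$ lives on the block $B_m$. Both are immediate from the definition of the induced process and the tower property, so no real obstacle arises; the argument is entirely elementary, which is why we merely sketch it and refer to \cite[Sections 6.3--6.4]{KS} for full details.
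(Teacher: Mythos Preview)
The paper does not give its own proof of this lemma; it simply states the result and refers the reader to \cite[Sections~6.3--6.4]{KS} (and to \cite{RP}) for the argument. Your sketch is correct and is essentially the standard proof one finds in Kemeny--Snell: sufficiency by conditioning on $X_m$ and observing that its conditional law given the $Y$-history is supported on the block $B_m$, and necessity by specializing to Dirac initial distributions. One minor remark on the converse: the phrase ``it is a one-step transition probability of the chain $(Y_i)$, hence independent of where inside $B_0$ we started'' tacitly uses that the lumped chain has the \emph{same} transition matrix for every starting distribution (this is how lumpability is formulated in \cite{KS}); your parenthetical already acknowledges this point, so there is no genuine gap.
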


  It is known that if the chain $(X_i)$ is ergodic with stationary distribution
  $\pi$, then the induced chain $(Y_i)$ has a limiting stationary distribution
  $\bar{\pi}$, given by
     \begin{equation} \label{eq:barpi}
       \bar{\pi} (B) \ = \ \sum_{x \in B} \ \pi (x)
     \end{equation}
  and one may inquire about rates of convergence to stationarity (even if the
  induced chain is not Markov). There has been considerable work on convergence
  rates in the situation of Example \ref{ex:tsetlin} (see \cite{BHP, Fi}) and in
  that of riffle shuffling (see \cite{Di1} for a survey and \cite{ADS} for some
  recent developments and references). Further work appears in Sections
  \ref{sec:appA} and \ref{sec:appB}.

  \section{Main results}
  \label{sec:main}

  This section contains our main theoretical contribution. Following a
  suggestion of Uyemura Reyes \cite{Rey}, we show that the process which is
  induced from a hyperplane walk on the set of chambers of a subarrangement is
  a Markov chain which is itself a hyperplane walk, with transition matrix easily
  computable in terms of the original walk (Corollary \ref{cor:sub}). We also
  give a new proof of the description of the eigenvalues of hyperplane walks
  (part (i) of Theorem \ref{thm:BHR-BD}), which uses only basic enumerative
  combinatorics, and a new proof of the basic convergence theorem (part
  (iv) of Theorem \ref{thm:BHR-BD}), which is perhaps more transparent
  than the one given in \cite{BD}.

  Throughout this section, $\aA$ is a hyperplane arrangement in the vector space
  $V = \RR^n$ with set of chambers $\cC_\aA$ and set of faces $\fF_\aA$, $\bB
  \subseteq \aA$ is a subarrangement with set of chambers $\cC_\bB$ and set of
  faces $\fF_\bB$ and $K$ is the transition matrix of the hyperplane walk on
  $\aA$ associated to a given probability measure $w$ on $\fF_\aA$. Our
  starting point is the observation that every chamber $C \in \cC_\aA$ is
  contained in a unique chamber of $\bB$, which we denote by $\overline{C}$.
  Moreover, every chamber of $\bB$ contains at least one chamber of $\aA$. Thus
  there is a surjective map $f: \cC_\aA \to \cC_\bB$ defined by $f(C) =
  \overline{C}$ for $C \in \cC_\aA$ and hence the hyperplane walk on $\aA$
  associated to $w$ induces a stochastic process on the state space $\cC_\bB$, in
  the sense of Section \ref{sec:functions}. The following proposition verifies
  Dynkin's criterion in this situation.

  \begin{proposition} \label{prop:sub}
    Let $D, D' \in \cC_\bB$ be chambers. If $C \in \cC_\aA$ is any chamber with
    $\overline{C} = D$, then the sum
      \begin{equation} \label{eq:KCD0}
      Q (C, D') \ = \sum_{C' \in \cC_\aA: \ \overline{C'}=D'} \ K(C, C')
      \end{equation}
    depends only on $D$ and $D'$ and not on the choice of $C$.
  \end{proposition}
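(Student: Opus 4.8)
The plan is to unwind the definition of $K$ from \eqref{eq:BHRdef} and rearrange the double sum in \eqref{eq:KCD0} so that the inner dependence on $C$ disappears. Writing things out,
\begin{equation*}
Q(C,D') \ = \sum_{C' \in \cC_\aA: \ \overline{C'}=D'} \ \sum_{F \in \fF_\aA: \, FC=C'} \ w(F) \ = \sum_{F \in \fF_\aA: \ \overline{FC}=D'} \ w(F),
\end{equation*}
the last equality because for each face $F$ the chamber $FC$ is uniquely determined, so the conditions ``$FC=C'$ for some $C'$ with $\overline{C'}=D'$'' and ``$\overline{FC}=D'$'' pick out exactly the same set of faces $F$. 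So it suffices to show that, for a fixed face $F\in\fF_\aA$, whether or not $\overline{FC}=D'$ holds depends only on $\overline{C}=D$ and not on the particular $C$ lying in $D$. Equivalently, I want: if $\overline{C_1}=\overline{C_2}=D$, then $\overline{F C_1}=\overline{F C_2}$.

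The key geometric fact is a comparison between the $\aA$-projection and the $\bB$-projection. Let $\bar F\in\fF_\bB$ be the face of the subarrangement $\bB$ whose relative interior contains the relative interior of $F$ (concretely, $\bar F$ is cut out by exactly those hyperplanes of $\bB$ that contain $F$); this is well defined since $\fF_\bB\subseteq$ the set of unions of faces of $\aA$. I claim that
\begin{equation*}
\overline{FC} \ = \ \bar F\,\overline{C}
\end{equation*}
for every chamber $C\in\cC_\aA$, where the right-hand side is the Tits projection computed in the arrangement $\bB$. Granting this claim, the proposition is immediate: the right-hand side $\bar F\,\overline{C}=\bar F D$ manifestly depends only on $D$, hence so does $Q(C,D')$, and we may unambiguously write $Q(D,D')$.

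To prove the claim, I would use the ``move a little from $F$ toward $C$'' description of projection given in the excerpt. Pick $x$ in the relative interior of $F$ and $y$ in $C$; then a point $z$ on the segment $[x,y]$ very close to $x$ lies in the face $FC$ of $\aA$, and in particular $z\in\overline{FC}$. The same point $x$ lies in the relative interior of $\bar F$ (by definition of $\bar F$) and $y$ lies in $\overline{C}$, so moving a small distance from $x$ toward $y$ along the segment produces a point in the face $\bar F\,\overline{C}$ of $\bB$ --- and for $z$ close enough to $x$ the two ``small distance'' points agree, so $z\in\bar F\,\overline{C}$ as well. Since $z$ is a single point lying in the chamber $\overline{FC}$ of $\bB$ and also in the chamber $\bar F\,\overline{C}$ of $\bB$, and chambers of $\bB$ are disjoint, the two chambers coincide. (One should check that $z$ actually lies in a chamber of $\bB$, not just a lower face: this holds because $FC$ is a chamber of $\aA$, $z$ is in its relative interior, and no hyperplane of $\bB\subseteq\aA$ passes through the interior of a chamber of $\aA$; alternatively, $\bar F\,\overline{C}$ is a chamber of $\bB$ by Tits' lemma since it is a projection of a chamber onto the face $\bar F$... wait, projection of a chamber onto a face of $\bB$ is again a chamber of $\bB$, so we are fine.)

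The main obstacle is making the identity $\overline{FC}=\bar F\,\overline{C}$ airtight, in particular pinning down the definition of $\bar F$ and verifying that the ``small perturbation'' defining the two projections can be taken simultaneously. This is a routine but slightly delicate point about the regular cell structure on $V$ induced by $\aA$ refining the one induced by $\bB$; once it is in place, the rearrangement of sums above finishes the proof with no further work.
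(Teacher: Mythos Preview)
Your proof is correct and follows essentially the same approach as the paper: both rewrite $Q(C,D')$ as $\sum_{F:\,\overline{FC}=D'} w(F)$ and then reduce to the identity $\overline{FC}=\bar F\,\overline{C}$. The paper simply asserts this identity as ``easy to check,'' whereas you supply the perturbation argument in detail; your extra care about $z$ lying in a chamber of $\bB$ is warranted but, as you note, immediate since $FC\in\cC_\aA$ and $\bB\subseteq\aA$.
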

  \begin{proof}
    Replacing $K (C, C')$ by the right-hand side of (\ref{eq:BHRdef}), we find
    that
    \begin{equation} \label{eq:KCD1}
    Q (C, D') \ = \ \sum_{C' \in \cC_\aA: \ \overline{C'}=D'} \ \
    \sum_{F \in \fF_\aA: \, FC=C'} \ w(F) \ \ = \
    \sum_{F \in \fF_\aA: \, \overline{FC}=D'} \ w(F).
    \end{equation}
  Let us denote by $\overline{F}$ the unique face of $\bB$ which contains $F \in
  \fF_\aA$. It is easy to check that $\overline{FC} = \overline{F} \ \overline{C}$
  holds for every $F \in \fF_\aA$. This observation and (\ref{eq:KCD1}) imply that
    \begin{equation} \label{eq:KCD2}
    Q (C, D') \ = \ \sum_{F \in \fF_\aA: \, \overline{F}D=D'} \ w(F).
    \end{equation}
  Clearly, the right-hand side of (\ref{eq:KCD2}) is independent of the choice
  of $C$.
  \end{proof}

  \begin{corollary} \label{cor:sub}
    For every starting distribution on $\cC_\aA$, the stochastic process induced
    on $\cC_\bB$ from the hyperplane walk on $\aA$ associated to $w$ is Markov.
    Moreover, such an induced chain is itself a hyperplane walk on $\bB$, with
    associated probability measure $w^*$ on $\fF_\bB$ defined by
      \begin{equation} \label{eq:w*def}
      w^* (G) \ = \ \sum_{F \in \fF_\aA: \, F \subseteq G} \ w(F)
    \end{equation}
   for $G \in \fF_\bB$.
  \end{corollary}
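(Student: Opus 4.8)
The plan is to derive both assertions of Corollary \ref{cor:sub} directly from Proposition \ref{prop:sub} together with the identity \eqref{eq:KCD2} established in its proof. For the Markov property, I would first invoke Lemma \ref{lem:markov} (Dynkin's criterion): Proposition \ref{prop:sub} says precisely that for any two chambers $D, D' \in \cC_\bB$ the quantity $P(X_1 \in D' \mid X_0 = C) = Q(C, D')$ is constant as $C$ ranges over the block $f^{-1}(D) = \{C \in \cC_\aA : \overline{C} = D\}$. Since this is exactly the hypothesis of Dynkin's criterion, the induced process $(Y_i)$ with $Y_i = \overline{X_i}$ is a Markov chain on $\cC_\bB$ for every starting distribution, with transition matrix $Q$.

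Next I would identify $Q$ as the transition matrix of a hyperplane walk on $\bB$. The hyperplane walk on $\bB$ associated to a measure $w^*$ on $\fF_\bB$ has, by \eqref{eq:BHRdef}, transition matrix $K^*(D, D') = \sum_{G \in \fF_\bB :\, GD = D'} w^*(G)$. So it suffices to show $Q(D, D') = K^*(D, D')$ when $w^*$ is defined by \eqref{eq:w*def}. Starting from \eqref{eq:KCD2}, $Q(C, D') = \sum_{F \in \fF_\aA :\, \overline{F}D = D'} w(F)$, I would partition the index set according to which face $G \in \fF_\bB$ equals $\overline{F}$: every $F \in \fF_\aA$ lies in a unique face $\overline{F}$ of $\bB$, and $F \subseteq G$ is equivalent to $\overline{F} = G$ (since the faces of $\bB$ partition $V$, and $F$ is connected hence lies in exactly one of them). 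Hence
\[
Q(C, D') \ = \ \sum_{G \in \fF_\bB :\, GD = D'} \ \sum_{F \in \fF_\aA :\, \overline{F} = G} \ w(F) \ = \ \sum_{G \in \fF_\bB :\, GD = D'} \ w^*(G),
\]
which is exactly $K^*(D, D')$ with $w^*$ as in \eqref{eq:w*def}.

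There is one small gap to fill carefully: in the double sum above I have used that the map $F \mapsto \overline{F}$ respects the face-action, i.e. $\overline{F}\,\overline{C} = \overline{FC}$, so that the condition ``$\overline{FC} = D'$'' on $F$ becomes ``$\overline{F}\,D = D'$'' and then, after grouping by $G = \overline{F}$, becomes ``$GD = D'$''. This identity $\overline{FC} = \overline{F}\,\overline{C}$ was already asserted and used in the proof of Proposition \ref{prop:sub}, so I may quote it. I also need that $w^*$ as defined is genuinely a probability measure on $\fF_\bB$: this is immediate since the faces $G \in \fF_\bB$ induce (via $F \mapsto \overline{F}$) a partition of $\fF_\aA$, so $\sum_{G \in \fF_\bB} w^*(G) = \sum_{F \in \fF_\aA} w(F) = 1$.

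The main obstacle, such as it is, is the bookkeeping around the face structure of the subarrangement $\bB$: one must be comfortable asserting that each face of $\aA$ sits inside a unique face of $\bB$ (which follows from the fact that the faces of any arrangement partition the ambient space and that a face, being connected, cannot straddle two such cells) and that containment $F \subseteq G$ is then synonymous with $\overline{F} = G$. Once these facts and the compatibility $\overline{FC} = \overline{F}\,\overline{C}$ are in hand, both claims of the corollary are essentially a reindexing of the sum in \eqref{eq:KCD2}, so the argument is short.
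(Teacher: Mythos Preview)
Your proposal is correct and follows essentially the same route as the paper's own proof: invoke Dynkin's criterion via Proposition~\ref{prop:sub} for the Markov property, then regroup the sum in \eqref{eq:KCD2} according to $G=\overline{F}$ to recognize it as the transition matrix \eqref{eq:BHRdef} of the hyperplane walk on $\bB$ with weights $w^*$. You spell out a few points (that $w^*$ is a probability measure, that $F\subseteq G\Leftrightarrow\overline{F}=G$) which the paper leaves implicit, but the argument is the same.
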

  \begin{proof}
  The first statement follows from Proposition \ref{prop:sub} and Lemma
  \ref{lem:markov}. The transition matrix $K^*$ of the induced Markov
  chain on $\cC_\bB$ is given by the right-hand side of (\ref{eq:KCD0}), so that
    \begin{equation} \label{eq:KDD1}
    K^* (D, D') \ = \sum_{C' \in \cC_\aA: \ C' \subseteq D'} \ K (C, C')
    \end{equation}
  holds for $D, D' \in \cC_\bB$, where $C \in \cC_\aA$ is any of the chambers of
  $\aA$ contained in $D$. Finally, we note that (\ref{eq:KCD2}) can be rewritten
  as
    \begin{equation} \label{eq:KDD2}
    K^* (D, D') \ = \ \sum_{G \in \fF_\bB: \, GD=D'} \ w^*(G),
    \end{equation}
  where $w^* (G)$ is as in (\ref{eq:w*def}). This proves the second statement
  in the corollary.
  \end{proof}

  The next statement summarizes the main conclusions of our discussion.

  \begin{theorem} \label{cor:main}
    Let $\aA$ be a hyperplane arrangement in $V$ with set of chambers $\cC_\aA$
    and let $w$ be a probability measure on its set of faces $\fF_\aA$. Let
    $\bB \subseteq \aA$ be a subarrangement with set of chambers $\cC_\bB$ and
    set of faces $\fF_\bB$ and let $K^*$ be the transition matrix of
    the Markov chain on $\bB$ induced from the hyperplane walk on $\aA$
    associated to $w$.
     \begin{itemize}
       \item[(i)] The characteristic polynomial of $K^*$ is given by
       $$\det (xI - K^*) \ = \ \prod_{W \in \lL_\bB} \ (x - \lambda_W)^{m^*_W},$$
       where $\lL_\bB$ is the intersection poset of $\bB$, $\lambda_W$ is as
       in (\ref{eq:lambdaW}),
       $$m^*_W \ = \ |\mu_\bB (V, W)| \ = (-1)^{\codim(W, V)} \mu_\bB (V, W)$$
       and $\mu_\bB$ is the M\"obius function of $\lL_\bB$.

       \item[(ii)] The matrix $K^*$ is diagonalizable.
       \item[(iii)] $K^*$ has a unique stationary distribution $\bar{\pi}$
       if and only if for every $H \in \bB$ there exists a face $F \in \fF_\aA$
       such that $F \not\subseteq H$ and $w(F) > 0$. Moreover, if $w$ is
       separating, so that the stationary distribution $\pi$ of the hyperplane
       walk on $\aA$ also exists, then we have
         \begin{equation} \label{eq:pis}
           \bar{\pi}(D) \ = \ \sum_{C \in \cC_\aA: \, C \subseteq D} \ \pi(C)
         \end{equation}
       for every chamber $D \in \cC_\bB$.
       \item[(iv)] Assume that $\bar{\pi}$ exists and let $(K^*_D)^l$
       be the distribution of the induced chain started from the chamber $D
       \in \cC_\bB$ after $l$ steps. Then its total variation distance from
       $\bar{\pi}$ satisfies
         \begin{equation} \label{eq:tot3}
         \| (K^*_D)^l - \bar{\pi} \|_\TV \ \le \ P \left( F_1 F_2 \cdots F_l
         \subseteq \bigcup_{H \in \bB} H \right),
         \end{equation}
       where $(F_1, F_2,\dots)$ consists of independent and identically
       distributed picks from the measure $w$
       on $\fF_\aA$, and
         \begin{equation} \label{eq:tot4}
         \| (K^*_D)^l - \bar{\pi} \|_\TV \ \le \ \sum_{H \in \bB} \ \lambda_H^l.
         \end{equation}
     \end{itemize}
  \end{theorem}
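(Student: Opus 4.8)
The plan is to reduce everything to Theorem~\ref{thm:BHR-BD} applied to the arrangement $\bB$ with the pushed-forward measure $w^*$. By Corollary~\ref{cor:sub}, the induced chain on $\cC_\bB$ is exactly the hyperplane walk on $\bB$ associated to $w^*$, where $w^*(G)=\sum_{F\in\fF_\aA:\,F\subseteq G}w(F)$ for $G\in\fF_\bB$. So parts (i)--(iv) will follow from the corresponding parts of Theorem~\ref{thm:BHR-BD} once I translate the quantities $\lambda_W$, $m^*_W$, the separating condition, and the two convergence bounds into statements about $w$ rather than $w^*$.

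First I would verify the eigenvalue identity. For $W\in\lL_\bB$, Theorem~\ref{thm:BHR-BD}(i) applied to $(\bB,w^*)$ gives the eigenvalue $\sum_{G\in\fF_\bB:\,G\subseteq W}w^*(G)$. Substituting the definition of $w^*$, this is $\sum_{G\in\fF_\bB:\,G\subseteq W}\ \sum_{F\in\fF_\aA:\,F\subseteq G}w(F)$. The key observation is that the faces $G\in\fF_\bB$ with $G\subseteq W$ partition the set of points in $W$ not already accounted for by lower-dimensional faces; more precisely, every face $F\in\fF_\aA$ with $F\subseteq W$ is contained in $\overline F\in\fF_\bB$ and $\overline F\subseteq W$ as well, and conversely every $F\subseteq\overline F\subseteq W$ arises this way exactly once. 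Hence the double sum collapses to $\sum_{F\in\fF_\aA:\,F\subseteq W}w(F)=\lambda_W$ as defined in (\ref{eq:lambdaW}). This identification is the crux of part (i); the multiplicity statement $m^*_W=|\mu_\bB(V,W)|$ is then immediate from Theorem~\ref{thm:BHR-BD}(i) since $\bB$ is itself an arrangement with its own intersection poset $\lL_\bB$ and M\"obius function $\mu_\bB$. Part (ii), diagonalizability, is then just Theorem~\ref{thm:BHR-BD}(ii) for $(\bB,w^*)$ with no further work.

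For part (iii), Theorem~\ref{thm:BHR-BD}(iii) says $K^*$ has a unique stationary distribution iff $w^*$ is separating, i.e.\ for every $H\in\bB$ there is a face $G\in\fF_\bB$ with $G\not\subseteq H$ and $w^*(G)>0$. I would show this is equivalent to the stated condition on $w$: $w^*(G)>0$ means some $F\in\fF_\aA$ with $F\subseteq G$ has $w(F)>0$, and $G\not\subseteq H$ together with $F\subseteq G$ gives $F\not\subseteq H$; conversely if $F\not\subseteq H$ and $w(F)>0$ then $\overline F\not\subseteq H$ (as $H\in\bB$ and $F\subseteq\overline F$, so $\overline F\subseteq H$ would force $F\subseteq H$) and $w^*(\overline F)\ge w(F)>0$. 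The formula (\ref{eq:pis}) for $\bar\pi$ is precisely (\ref{eq:barpi}) from Section~\ref{sec:functions} specialized to the surjection $f(C)=\overline C$, using that $\{C\in\cC_\aA:C\subseteq D\}=f^{-1}(D)$, and requires no new argument beyond noting the induced chain inherits the limiting distribution of its (Markov) parent.

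For part (iv), I apply the two bounds of Theorem~\ref{thm:BHR-BD}(iv) to $(\bB,w^*)$. The bound (\ref{eq:tot1}) gives $\|(K^*_D)^l-\bar\pi\|_\TV\le P\{G_1G_2\cdots G_l\notin\cC_\bB\}$ where the $G_i$ are i.i.d.\ from $w^*$. I would realize $G_i=\overline{F_i}$ for $F_i$ i.i.d.\ from $w$ (this is exactly what the pushforward means), and then use the identity $\overline{F_1F_2\cdots F_l}=\overline{F_1}\,\overline{F_2}\cdots\overline{F_l}$ — which follows by induction from the single-step identity $\overline{FC}=\overline F\,\overline C$ proved in Proposition~\ref{prop:sub}, extended to faces via the ``move a small distance'' recipe for $FG$. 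Then $G_1\cdots G_l\notin\cC_\bB$ iff $\overline{F_1\cdots F_l}$ is not a chamber of $\bB$, iff $F_1\cdots F_l$ is contained in some hyperplane of $\bB$, i.e.\ iff $F_1\cdots F_l\subseteq\bigcup_{H\in\bB}H$, giving (\ref{eq:tot3}). The bound (\ref{eq:tot4}) is (\ref{eq:tot2}) for $(\bB,w^*)$ once we know $\lambda_H$ (computed with $w^*$ on $\bB$) equals $\lambda_H$ (computed with $w$ on $\aA$) for $H\in\bB$ — but this is the special case $W=H$ of the eigenvalue identity from part (i).

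The main obstacle is the compatibility lemma $\overline{F_1\cdots F_l}=\overline{F_1}\cdots\overline{F_l}$ (equivalently $\overline{FG}=\overline F\,\overline G$ for faces), which underpins both the eigenvalue collapse in (i) and the probabilistic bound (\ref{eq:tot3}); one must argue it carefully from the geometry of projections, checking that the small-displacement recipe defining $FG$ in $\aA$ projects to the corresponding recipe in $\bB$. Everything else is bookkeeping: translating ``$w^*$'' back to ``$w$'' in each of the four parts.
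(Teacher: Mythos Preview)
Your proposal is correct and follows essentially the same route as the paper: reduce everything to Theorem~\ref{thm:BHR-BD} applied to $(\bB,w^*)$ via Corollary~\ref{cor:sub}, and then identify $\lambda^*_W=\lambda_W$ by the partition of each $G\in\fF_\bB$ into the $\aA$-faces it contains. The paper's own proof is terser---it dispatches (ii)--(iv) in a single sentence as ``direct consequences''---whereas you spell out the translation of the separating condition and the compatibility $\overline{F_1\cdots F_l}=\overline{F_1}\cdots\overline{F_l}$ needed for (\ref{eq:tot3}); this extra care is appropriate, and the homomorphism property you flag is indeed the only nontrivial ingredient (the paper states only the chamber case $\overline{FC}=\overline F\,\overline C$ in Proposition~\ref{prop:sub}, leaving the face-to-face version implicit).
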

  \begin{proof}
  Let $w^*$ be as in Corollary \ref{cor:sub}. By Corollary \ref{cor:sub} and
  Theorem \ref{thm:BHR-BD}, the characteristic polynomial of $K^*$ is given
  by the expression suggested in part (i), provided that $\lambda_W$ is
  replaced by
    \begin{equation} \label{eq:lambda*W}
      \lambda^*_W \ = \ \sum_{G \in \fF_\bB: \, G \subseteq W} \ w^*(G)
    \end{equation}
  for every $W \in \lL_\bB$. Since every face $G \in \fF_\bB$ is partitioned
  by the faces $F \in \fF_\aA$ contained in $G$, it follows from
  (\ref{eq:w*def}) that the right-hand sides of (\ref{eq:lambdaW}) and
  (\ref{eq:lambda*W}) coincide. Hence we have $\lambda^*_W = \lambda_W$ for
  every $W \in \lL_\bB$ and part (i) follows. The remaining parts are direct
  consequences of the combination of Corollary \ref{cor:sub} with Theorem
  \ref{thm:BHR-BD}.
  \end{proof}

  We now turn to our new proofs of parts (i) and (iv) of Theorem
  \ref{thm:BHR-BD}. The proof of part (i) is motivated by the combinatorial
  method used in \cite{Ath1} to determine the eigenvalues of some matrices,
  with rows and columns indexed by the oriented rooted spanning trees of a
  directed graph. A related argument which proves diagonalizability is given
  in \cite[Section 8.1]{Br1} \cite[Section 3.4]{Br2}. We denote by $\tr (A)$
  the trace of a $p \times p$ matrix $A = (a_{ij})$, so that
    \begin{equation} \label{eq:tr}
      \tr (A^l) \ = \ \sum_{i=1}^p \ \sum_{1 \le i_1,\dots,i_{l-1}
      \le p} a_{ii_1} a_{i_1i_2} \cdots a_{i_{l-1}i}
    \end{equation}
  holds for every positive integer $l$. The method of \cite{Ath1} is based on
  the following elementary lemma.

  \begin{lemma} \label{lem:eigen}
    Let $A = (a_{ij})$ be a $p \times p$ matrix with complex entries
    and let $\lambda_1, \lambda_2,\dots,\lambda_p$ be complex numbers.
    If $\tr (A^l) = \lambda^l_1 + \lambda^l_2 + \cdots + \lambda^l_p$
    holds for every positive integer $l$, then $\lambda_1,
    \lambda_2,\dots,\lambda_p$ are the eigenvalues of $A$.
  \end{lemma}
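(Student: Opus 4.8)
The plan is to use the fundamental relationship between the power sums of the eigenvalues of a matrix and the coefficients of its characteristic polynomial, via Newton's identities. First I would recall that if $\mu_1, \mu_2, \dots, \mu_p$ are the actual eigenvalues of $A$ (listed with multiplicity), then $\tr(A^l) = \mu_1^l + \mu_2^l + \cdots + \mu_p^l$ for every positive integer $l \ge 1$; this is standard and follows, for instance, by putting $A$ in upper triangular (Jordan or Schur) form over $\CC$. The hypothesis therefore says that the power sums $p_l(\mu_1,\dots,\mu_p)$ and $p_l(\lambda_1,\dots,\lambda_p)$ agree for every $l \ge 1$.

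Next I would invoke Newton's identities, which express the elementary symmetric functions $e_1, e_2, \dots, e_p$ of $p$ quantities as polynomials (with rational coefficients, and crucially with no constant term issues since we only need $e_k$ for $k \le p$) in the power sums $p_1, p_2, \dots, p_k$. Since the power sums agree for all $l$ in the relevant range $1 \le l \le p$, it follows that $e_k(\mu_1,\dots,\mu_p) = e_k(\lambda_1,\dots,\lambda_p)$ for every $1 \le k \le p$. Hence the two monic degree-$p$ polynomials $\prod_{i=1}^p (x - \mu_i)$ and $\prod_{i=1}^p (x - \lambda_i)$ have the same coefficients, i.e.\ they are equal. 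But the first of these is precisely the characteristic polynomial $\det(xI - A)$, so $\det(xI - A) = \prod_{i=1}^p (x - \lambda_i)$, which says exactly that $\lambda_1, \dots, \lambda_p$ are the eigenvalues of $A$ (with the correct multiplicities).

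The one point requiring mild care is that Newton's identities involve division by integers $1, 2, \dots, p$, so the argument needs the ground field to have characteristic zero; here everything takes place over $\CC$, so there is no obstacle. I expect this to be the only subtlety worth mentioning, and it is a non-issue in the present setting. An alternative, essentially equivalent route avoiding explicit mention of Newton's identities is to observe that the generating function $\sum_{l \ge 1} p_l t^l$ determines, and is determined by, the polynomial $\prod (1 - \mu_i t)$ via the logarithmic derivative identity $\sum_{l \ge 1} p_l t^l = -t \frac{d}{dt} \log \prod_i (1 - \mu_i t)$; matching the two generating functions forces the two products to coincide. Either way, the proof is short, and no genuine difficulty arises — the lemma is a packaging of classical symmetric function identities.
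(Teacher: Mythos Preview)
Your proof is correct and follows essentially the same approach as the paper: both start from $\tr(A^l)=\sum_i \mu_i^l$ for the true eigenvalues $\mu_i$ and then argue that equal power sums for all $l$ force the multisets $\{\lambda_i\}$ and $\{\mu_i\}$ to coincide. The paper simply cites this last step as \cite[Lemma~2.1]{Ath1}, whereas you spell it out via Newton's identities (or the equivalent logarithmic-derivative generating function argument); these are standard and equivalent justifications of the same fact.
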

  \begin{proof}
    We note that $\tr (A^l) = \mu^l_1 + \mu^l_2 + \cdots +
    \mu^l_p$ holds for every positive integer $l$, where $\mu_1,
    \mu_2,\dots,\mu_p$ are the eigenvalues of $A$. It follows from this
    fact, our hypothesis and \cite[Lemma 2.1]{Ath1} that the $\lambda_i$
    are a permutation of the $\mu_j$. This proves the lemma.
  \end{proof}

  \medskip
  \noindent
  \begin{proof}[Proof of Theorem \ref{thm:BHR-BD} (i)]
  By Lemma \ref{lem:eigen}, it suffices to show that
    $$ \tr (K^l) \ = \sum_{W \in \lL_\aA} \ m_W (\lambda_W)^l $$
  holds for every positive integer $l$. Using the definition of $K$,
  we see that for this matrix (\ref{eq:tr}) can be rewritten as
    \begin{equation} \label{eq:f(K,l)}
    \tr (K^l) \ = \ \sum_{C \in \cC_\aA} \ \sum_{F_1 F_2 \cdots
    F_l C = C} \ w(F_1) w(F_2) \cdots w(F_l),
    \end{equation}
  where the inner sum ranges over all sequences $(F_1, F_2,\dots,F_l)$
  of elements of $\fF_\aA$ of length $l$ satisfying $F_1 F_2 \cdots F_l
  C = C$. Next we claim that for every $F \in \fF_\aA$ we have
    \begin{equation} \label{eq:FC}
    \# \{C \in \cC_\aA: FC = C\} \ = \ \sum_{W \in \lL_\aA: \ F \subseteq W}
    \ |\mu_\aA(V, W)| \ \ = \sum_{W \in \lL_\aA: \ F \subseteq W} \ m_W.
    \end{equation}
  Indeed, for a chamber $C \in \cC_\aA$ we have $FC = C$ if and only if $F$
  lies in the closure of $C$. The chambers of $\aA$ with this property are in
  a one to one correspondence with the chambers of the subarrangement of $\aA$
  consisting of those hyperplanes which contain $F$. Thus (\ref{eq:FC}) follows
  from Zaslavsky's formula \cite[Theorem 2.5]{Sta3} \cite{Za} for the number
  of chambers of this subarrangement. Using equations (\ref{eq:f(K,l)}) and
  (\ref{eq:FC}) we find that

  \begin{eqnarray*}
  \tr (K^l) &=& \sum_{C \in \cC_\aA} \ \sum_{F \in \fF_\aA: \, FC = C} \
  \sum_{F_1 F_2 \cdots F_l = F} \ w(F_1) w(F_2) \cdots w(F_l) \\
  & & \\
  &=& \sum_{F \in \fF_\aA} \ \# \{C \in \cC_\aA: F C = C\}
  \sum_{F_1 F_2 \cdots F_l = F} \ w(F_1) w(F_2) \cdots w(F_l) \\
  & & \\
  &=& \sum_{F \in \fF_\aA} \ \, \sum_{W \in \lL_\aA: \ F \subseteq W}
  \ m_W \
  \sum_{F_1 F_2 \cdots F_l = F} \ w(F_1) w(F_2) \cdots w(F_l) \\
  & & \\
  &=& \sum_{W \in \lL_\aA} \ m_W \ \sum_{F_1 F_2 \cdots F_l \subseteq W} \
  w(F_1) w(F_2) \cdots w(F_l) \\
  & & \\
  &=& \sum_{W \in \lL_\aA} \ m_W \ \sum_{F_1 \cup \cdots \cup F_l \subseteq
  W} \ w(F_1) w(F_2) \cdots w(F_l) \\
  & & \\
  &=& \sum_{W \in \lL_\aA} \ m_W \ \left( \sum_{F \subseteq W} \ w(F)
  \right)^l \ = \ \sum_{W \in \lL_\aA} \ m_W (\lambda_W)^l, \\
  \end{eqnarray*}
  as desired.
  \end{proof}

  Theorem \ref{thm:BHR-BD} (iv) is proved in \cite{BD} by considering backward
  iteration. The following coupling proof is perhaps more transparent. For
  background on coupling we refer the reader to \cite[p.~84]{Di} \cite[Chapter
  5]{LPW}. We recall that the probability measure $w$ on the set of faces of $\aA$
  is assumed to be separating. As before, $\cC_\aA$ is the set of chambers of
  $\aA$.

  \medskip
  \noindent
  \begin{proof}[Proof of Theorem \ref{thm:BHR-BD} (iv)]
  Consider two Markov chains $(X_i)$ and $(Y_i)$ evolving on $\cC_\aA$ as
  follows. The first chain starts at $X_0 = C$ and the second starts with $Y_0$
  chosen from the stationary distribution $\pi$. At time $i$ the face $F_i$ is
  chosen from $w$ and is used to upgrade both chains; thus $X_i = F_i X_{i-1}$
  and $Y_i = F_i Y_{i-1}$. Let $T$ be the first time $l$ that the hyperplanes
  of $\aA$ have been separated by $\{F_1, F_2,\dots,F_l\}$, meaning that for
  every $H \in \aA$ there exists an index $1 \le i \le l$ such that $F_i
  \not\subseteq H$. We claim that at this time we have $X_T = Y_T$. It is then
  clear that $X_j = Y_j$ has to hold for all $j \ge T$. Thus the two chains are
  coupled and (\ref{eq:tot1}) follows from the basic coupling inequality
  \cite[p.~84]{Di} \cite[Chapter 5]{LPW}. Since (\ref{eq:tot2}) follows easily
  from (\ref{eq:tot1}) (see \cite[p.~1839]{BD}), it remains to prove the claim.

  Consider any hyperplane $H \in \aA$ and choose an index $1 \le i \le T$ so
  that $F_i \not\subseteq H$. Then both chambers $X_i = F_i X_{i-1}$ and $Y_i =
  F_i Y_{i-1}$ lie in the same open half-space of $V$ determined by $H$ as $F_i$.
  Therefore these chambers lie in the same open half-space of $V$ determined by
  $H$. It follows by induction on $j$ that the same holds for $X_j$ and $Y_j$
  for all $j \ge i$ and thus for $j=T$ as well. We have shown that for every
  $H \in \aA$, the chambers $X_T$ and $Y_T$ lie in the same open half-space of
  $V$ determined by $H$. Clearly any two such chambers must be equal. This
  proves the claim and completes the proof.
  \end{proof}

  \begin{remark} \label{rem:cproof}
    {\rm As was the case in \cite{BD}, the argument in the previous proof
    does not require that faces are chosen independently from the same
    distribution. Any stationary process works as well. Nonstationary choices
    of face weights may be similarly handled. Then there may not be a stationary
    distribution and one needs to study ``merging" \cite{SZ}.}
  \end{remark}

  \section{Applications to hypercube walks}
  \label{sec:appA}

  Throughout this section, $\aA$ stands for the Boolean arrangement in $\RR^N$
  for some $N$, to be specified in each case. Specializing the choice of
  face weights and subarrangement gives a variety of natural examples. Part A
  treats the Ehrenfest urn of statistical mechanics. A spatial process driven by
  neighborhood attacks is studied in Part B. Part C gives a first treatment of
  the acyclic orientations chain (Example \ref{ex:acyclic} in the introduction);
  the results are summarized in Corollary \ref{cor:o}.

  \medskip
  \noindent
  \textbf{A. Ehrenfest Urn}.
  Consider the Markov chain of Example \ref{ex:nearest} with weights $w_i^\varepsilon
  = 1/2n$ for all $\varepsilon \in \{-, +\}$ and $1 \le i \le n$. This is the usual
  nearest neighbor random walk on the $n$-dimensional cube with holding $1/2$, also
  known as Ehrenfests' urn. The transition matrix $K$ has eigenvalues $j/n$ with
  multiplicity ${n \choose j}$, for $0 \le j \le n$, and a uniform stationary
  distribution $\pi$. This walk has a small literature of its own, reviewed in
  \cite[p.~19]{Di} \cite[Section 2.3]{LPW}. As explained there, the mixing time
  is $\frac{1}{2} n \log n$. The slightly less accurate bound
    $$ \| K_x^l - \pi \|_\TV \ \le \ n \left(1 - \frac{1}{n} \right)^l $$
  follows from (\ref{eq:tot-nearest}) and shows that the total variation distance on
  the left is bounded above by $e^{-c}$ if $l \ge n (\log n +c)$. To illustrate the
  speedup possible for a subarrangement walk in this case, consider the subarrangement
  $\bB = \{H_1, H_2,\dots,H_k\}$ of the Boolean arrangement $\aA$ in $\RR^n$. The
  induced walk is a Markov chain on the set $\{-, +\}^k$. Theorem \ref{cor:main}
  implies that its transition matrix $K^*$ has eigenvalues $(n-j)/n$ with multiplicity
  ${k \choose j}$, for $0 \le j \le k$, and a uniform stationary distribution $\bar{\pi}$.
  Equation (\ref{eq:tot4}) gives
   $$ \| (K^*_y)^l - \bar{\pi} \|_\TV \ \le \ k \left(1 - \frac{1}{n} \right)^l $$
  and hence the expression on the left is bounded above by $e^{-c}$ if
  $l \ge n (\log k +c)$.

  \medskip
  \noindent
  \textbf{B. Neighborhood Attacks}. Let $\gG$ be a (finite, undirected) simple graph
  on the node set $[n]$. Each node of $\gG$ is labeled with either $+$ or $-$. A
  Markov chain on the set $\{-, +\}^n$ of all $2^n$ possible labelings proceeds as
  follows. At each stage, a node of $\gG$ is chosen uniformly at random. The labels
  of this node and of its neighbors are all changed to $+$ or all changed to $-$,
  with probability $1/2$. The transition matrix $K$ for this chain satisfies
    $$K (x, x') \ = \ \frac{\mu(x, x')}{2n} $$
  for $x, x' \in \{-, +\}^n$, where $\mu(x, x')$ is the number of pairs $(i,
  \varepsilon)$ of nodes $i \in [n]$ and signs $\varepsilon \in \{-, +\}$ for which
  $x'$ is obtained from $x$ by changing the labels of $i$ and its neighbors in $\gG$
  to $\varepsilon$. Clearly, this is the chain defined by the hyperplane walk on the
  Boolean arrangement $\aA$ in $\RR^n$ for the following choice of face weights. For
  each node $i \in [n]$ and $\varepsilon \in \{-, +\}$ we denote by $F^\varepsilon_i$
  the face of $\aA$ whose sign vector has $j$-coordinate equal to $\varepsilon$,
  if $j$ is a neighbor of $i$ in $\gG$ or $j=i$, and equal to 0 otherwise. We define
  $w(F)$ as $1/2n$ times the number of pairs  $(i, \varepsilon)$ of nodes $i \in [n]$
  and signs $\varepsilon \in \{-, +\}$ for which $F^\varepsilon_i = F$ (note that we
  may have $F^\varepsilon_i = F^\varepsilon_j$ for distinct nodes $i, j \in [n]$).
  Ehrenfests' urn occurs as the special case in which $\gG$ has no edges.

  For $S \subseteq [n]$ we denote by $\alpha(S)$ the number of nodes of $\gG$ which
  are neither  equal nor adjacent to any of the nodes in $S$. It follows from
  (\ref{eq:lambdaB}) that $K$ has eigenvalues $j/n$, with multiplicity equal to the
  number of subsets $S \subseteq [n]$ with
  $\alpha(S) = j$, for $0 \le j \le n$ and that for $1 \le i \le n$, the eigenvalue
  contributed by the hyperplane $H_i$ of $\aA$ is equal to $1 - (d_i + 1)/n$, where
  $d_i$ is the degree of node $i$ in $\gG$. The stationary distribution $\pi$
  for this example exists for every graph $\gG$ but is hard to compute in general.
  Inequality (\ref{eq:tot1}) bounds the total variation distance $\| K_x^l - \pi
  \|_\TV$ from above by the probability that
    $$ \bigcup_{i=1}^l \ N(v_i) \ \ne \ [n], $$
  where nodes $v_1, v_2,\dots,v_l$ are picked independently and uniformly from $[n]$
  and $N(v)$ stands for the set of nodes of $\gG$ which are either adjacent or
  equal to $v$. To compute this probability is a classical problem, called the
  ``coverage problem"; see, for instance, \cite{Al, BHJ, CKS}. Similarly, the
  eigenvalue bound (\ref{eq:tot2}) becomes
     \begin{equation} \label{eq:tot-attack}
       \| K_x^l - \pi \|_\TV \ \le \ \sum_{i=1}^n \ \left(1 - \frac{d_i+1}{n}
       \right)^l.
     \end{equation}
  For instance, if $\gG$ is the complete graph on the node set $[n]$, then $d_i =
  n-1$ for all $i$ and the walk becomes random after exactly one step.

  The eigenvalue bound is not perfect. For instance, consider a ``star graph",
  having one central node of degree $n-1$, and $n-1$ leaves of degree one. The
  right-hand side of (\ref{eq:tot-attack})
  becomes $(n-1)(1-2/n)^l$ and shows that order of $n \log n$ steps suffice. On the
  other hand, the coverage bound is bounded above by $(1-1/n)^l$, which is the chance
  of missing the central node in the first $l$ steps. This implies that order of $n$
  steps suffice. An elementary argument shows that this is the correct answer. For a
  general graph $\gG$, (\ref{eq:tot-attack}) implies that $\| K_x^l - \pi \|_\TV \le
  e^{-c}$ if $l \ge \frac{n}{d+1} (\log n + c)$, where $d$ is the largest of the
  degrees $d_i$.

  To estimate the time it takes for a subset of nodes, say $\{1, 2,\dots,k\}$, to
  equilibriate, consider the subarrangement $\bB = \{H_1, H_2,\dots,H_k\}$ of $\aA$
  and note that (\ref{eq:tot4}) becomes
    $$ \| (K^*_y)^l - \bar{\pi} \|_\TV \ \le \ \sum_{i=1}^k \
       \left(1 - \frac{d_i+1}{n} \right)^l. $$
  This offers a range of possibilities to illustrate the speedup possible; we leave
  further details and examples to the interested reader. One can also deduce easily
  from Theorem \ref{cor:main} that the transition matrix $K^*$ of the induced chain
  has eigenvalues $\alpha(S)/n$, one for each $S \subseteq [k]$.

  The previous situation can be easily varied; the nodes can be chosen with varying
  probability, the size and shape of the neighborhood can be allowed to fluctuate and
  the chance of $+$ or $-$ need not be symmetric. With such freedom, the stationary
  distribution becomes intractable but it is still staightforward to give upper bounds
  for the total variation distance to stationarity. Lower bounds are harder to achieve,
  due to our lack of knowledge of the stationary distribution.


  \medskip
  \noindent
  \textbf{C. Orientations}. Let $\gG$ be a (finite, undirected) simple graph on the
  node set $[n]$ with $m$ edges. An \emph{orientation} of $\gG$ is an assignment
  of a direction $i \to j$ or $j \to i$ to each edge $\{i, j\}$ of $\gG$. We will
  denote by ${\rm O} (\gG)$ the set of all orientations of $\gG$. This set is in
  bijection with $\{-, +\}^m$ and hence with the set of chambers of the Boolean
  arrangement $\aA$ in $\RR^m$. To be more specific, let $E_\gG = \{e_1,
  e_2,\dots,e_m\}$ be the set of edges of $\gG$, equipped with a fixed linear
  ordering of its elements, and let us identify an orientation $\oo \in {\rm O}
  (\gG)$ with the sign vector $(\varepsilon_1, \varepsilon_2,\dots,\varepsilon_m)
  \in \{-, +\}^m$ for which
    $$ \varepsilon_k \ = \ \begin{cases} -, & \text{if \ $e_k$ is directed as $i
                         \to j$ in $\oo$ and $i<j$} \\
                         +, & \text{if \ $e_k$ is directed as $i
                         \to j$ in $\oo$ and $i>j$} \end{cases}$$
  for $1 \le k \le m$, where $e_k = \{i , j\}$. Thus any hyperplane walk on $\aA$
  defines a Markov chain on ${\rm O} (\gG)$. A choice of face weights which gives
  Example \ref{ex:acyclic} of the introduction is
  the following. Let $w_1, w_2,\dots,w_n$ be nonnegative real numbers summing to
  1. For $1 \le i \le n$, we denote by $F_i$ the face of $\aA$ whose sign
  vector has $k$th coordinate equal to $+$, if $e_k = \{i, j\}$ with $i<j$, to $-$,
  if $e_k = \{i, j\}$ with $i>j$ and to 0, if $e_k$ is not incident to $i$. We let
  $w(F_i) = w_i$ for each node $i \in [n]$ which is not isolated in $\gG$ and $w(F)
  = 0$ for all other nonzero faces of $\aA$, and define $w(F)$ as the sum of $w_i$
  over all isolated nodes $i \in [n]$ of $\gG$, if $F$ is the zero face of $\aA$.

  The resulting chain on ${\rm O} (\gG)$ proceeds from a given orientation
  by selecting the node $i$ of $\gG$ with probability $w_i$ and reorienting all
  edges of $\gG$ incident to this node towards itself, to reach a new orientation
  of $\gG$, leaving the orientations of all other edges of $\gG$ unchanged.
  Equivalently, the transition matrix $K$ of this chain on ${\rm O} (\gG)$
  satisfies
    \begin{equation} \label{eq:otransition}
       K (\oo, \oo') \ = \ \sum_{i \in [n]: \, \rho_i (\oo)=\oo'} \ w_i
    \end{equation}
  for $\oo, \oo' \in {\rm O} (\gG)$, where $\rho_i (\oo)$ denotes the
  orientation of $\gG$ obtained from $\oo$ by reorienting towards $i$, as just
  described. We collect the consequences of Theorem
  \ref{thm:BHR-BD} for this example in the following statement. We denote the
  stationary distribution by $\Pi$ to avoid confusion with the notation of Section
  \ref{sec:appB}, where acyclic orientations of $\gG$ are considered and $\pi$
  has a different meaning. A subset $T$ of the set of nodes of $\gG$ is said to be
  \emph{dominating} in $\gG$ if every edge of $\gG$ is incident to at least one
  node in $T$.

  \begin{corollary} \label{cor:o}
  Let $\gG$ be a simple graph on the node set $[n]$ and let $E_\gG = \{e_1,
  e_2,\dots,e_m\}$ be the set of edges of $\gG$. The following hold for the chain
  (\ref{eq:otransition}) on the set ${\rm O} (\gG)$ of orientations of $\gG$:
     \begin{itemize}
       \item[(i)] The matrix $K$ is diagonalizable with eigenvalues
         \begin{equation} \label{eq:lambdaO}
         \lambda_S \ = \ \sum_{i \in N_S} \ w_i,
         \end{equation}
       one for each $S \subseteq [m]$, where $N_S$ is the set of nodes $i \in [n]$
       which do not belong to any of the edges $e_k \in E_\gG$ with $k \in S$.
       \item[(ii)] $K$ has a unique stationary distribution $\Pi$ if and only if there
       is no edge $\{i, j\} \in E_\gG$ such that $w_i = w_j = 0$.
       \item[(iii)] Assume that $\Pi$ exists and let $K_\oo^l$ be the distribution
       of the chain started from the orientation $\oo \in {\rm O} (\gG)$ after $l$
       steps. We have
         \begin{equation} \label{eq:otot1}
         \| K_\oo^l - \Pi \|_\TV \ \le \ P \left( \{v_1, v_2,\dots,v_l\}
         \ \text{is not dominating in} \ \gG \right),
         \end{equation}
       where $(v_1, v_2,\dots)$ consists of independent and identically
       distributed picks from $w$, and
         \begin{equation} \label{eq:otot2}
         \| K_\oo^l - \Pi \|_\TV \ \le \ \sum_{\{i, j\} \in E_\gG} \
         (1 - w_i - w_j)^l.
         \end{equation}
       In particular, we have
         \begin{equation} \label{eq:otot3}
         \| K_\oo^l - \Pi \|_\TV \ \le \ m \left(1 - \frac{2}{n}
         \right)^l
         \end{equation}
       if $w_1 = \cdots = w_n = 1/n$.
     \end{itemize}
  \end{corollary}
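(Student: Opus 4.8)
The plan is to observe that the chain (\ref{eq:otransition}) is precisely the hyperplane walk on the Boolean arrangement $\aA$ in $\RR^m$ associated to the face weights described just before the statement (the identification of ${\rm O}(\gG)$ with $\cC_\aA$ having already been set up above), and then to read off all three parts from Theorem \ref{thm:BHR-BD} together with the discussion of Boolean arrangement walks in Part~A of Section \ref{sec:ex}, after translating the relevant combinatorial data. The key observation to record first is that the only faces of $\aA$ carrying positive weight are the zero face, whose weight is the sum of $w_i$ over the isolated nodes $i$ of $\gG$, and the faces $F_i$ attached to non-isolated nodes $i$; by construction the $k$th coordinate of $F_i$ is nonzero exactly when the edge $e_k$ is incident to $i$.

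For part (i), diagonalizability is immediate from Theorem \ref{thm:BHR-BD}(ii), while by (\ref{eq:lambdaB}) the eigenvalues are $\lambda_S = \sum_{F \in \fF_S} w(F)$, one for each $S \subseteq [m]$, where $\fF_S$ is the set of faces all of whose coordinates indexed by $S$ vanish. I would then note that a face $F_i$ lies in $\fF_S$ exactly when no edge $e_k$ with $k \in S$ is incident to $i$, i.e. when $i \in N_S$, whereas the zero face lies in every $\fF_S$ and contributes the isolated nodes, which belong to $N_S$ for all $S$. Summing the relevant weights gives $\lambda_S = \sum_{i \in N_S} w_i$, which is (\ref{eq:lambdaO}).

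For part (ii), Theorem \ref{thm:BHR-BD}(iii) says that $K$ has a unique stationary distribution if and only if $w$ is separating, i.e. for every hyperplane $H_k$ there is a face $F$ with $w(F) > 0$ and $F \not\subseteq H_k$. The zero face is contained in every $H_k$, while $F_i \not\subseteq H_k$ precisely when $e_k$ is incident to $i$; hence $w$ is separating if and only if for each edge $e_k = \{i, j\}$ at least one of $w_i$ and $w_j$ is positive, which is the stated criterion.

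For part (iii), I would specialize the bounds (\ref{eq:tot1}) and (\ref{eq:tot2}) of Theorem \ref{thm:BHR-BD}(iv). Picking a face from $w$ amounts to picking a node $v$ of $\gG$ from the distribution $w$ (isolated nodes yielding the zero face), and for faces of a Boolean arrangement the product $F_1 F_2 \cdots F_l$ has $k$th coordinate equal to the first nonzero one among the $k$th coordinates of $F_1, \dots, F_l$. Hence $F_1 F_2 \cdots F_l \in \cC_\aA$ if and only if, for every edge $e_k$, at least one of the picked nodes $v_1, \dots, v_l$ is incident to $e_k$, i.e. if and only if $\{v_1, \dots, v_l\}$ is dominating in $\gG$, which is (\ref{eq:otot1}); the eigenvalue contributed by $H_k$ is $\lambda_{\{k\}} = \sum_{i \in N_{\{k\}}} w_i = 1 - w_i - w_j$ for $e_k = \{i, j\}$ (using $\sum_i w_i = 1$), so summing over $k$ gives (\ref{eq:otot2}), of which (\ref{eq:otot3}) is the case $w_1 = \cdots = w_n = 1/n$, where each of the $m$ summands equals $(1 - 2/n)^l$. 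The argument is essentially a matter of translation; the only points demanding a little care are the bookkeeping of isolated nodes and the zero face in part (i) and the equivalence of ``$F_1 \cdots F_l$ is a chamber'' with the domination condition in part (iii), so I do not expect a substantial obstacle.
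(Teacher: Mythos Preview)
Your proposal is correct and follows essentially the same approach as the paper: the paper's proof simply invokes Theorem~\ref{thm:BHR-BD} and the Boolean arrangement discussion in Section~\ref{sec:ex} for parts (i) and (ii), and for part (iii) records exactly the two translations you spell out (that a product of the $F_v$ is a chamber iff the corresponding node set is dominating, and that $\lambda_{\{k\}} = 1 - w_i - w_j$ for $e_k = \{i,j\}$). Your write-up is just a more detailed version of the same argument, including the careful bookkeeping of isolated nodes via the zero face.
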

  \begin{proof}
  Parts (i) and (ii) follow directly from Theorem \ref{thm:BHR-BD} and the relevant
  discussion in Section \ref{sec:ex}. Part (iii) follows from Theorem \ref{thm:BHR-BD}
  (iv), since a product of faces of $\aA$ of the form $F_v$ is a chamber if and only
  if the corresponding set of nodes $v$ is dominating in $\gG$ and since
  $\lambda_{\{e_k\}} = 1 - w_i - w_j$ is the eigenvalue corresponding to the hyperplane
  $x_k = 0$ of $\aA$ associated to the edge $e_k = \{i, j\}$ of $\gG$.
  \end{proof}

  \begin{example}
    {\rm Let $n = 2d$ be even and consider the graph $\gG$ with edges $\{1, d+1\}$,
    $\{2, d+2\},\dots,\{d, 2d\}$. The set of orientations of $\gG$ can be identified
    with $\{-, +\}^d$, where the $i$th coordinate of a sign vector is equal to $+$
    or $-$ if the edge $\{i, d+i\}$ is directed towards $i$ or towards $d+i$,
    respectively, in the corresponding orientation. The chain proceeds, at each stage,
    from the current sign vector by picking a coordinate $i$ and switching it to $+$
    (respectively, $-$) with probability $w_i$ (respectively, $w_{i+d}$). Clearly,
    this chain concides with the nearest neighbor random walk of Example
    \ref{ex:nearest} on the vertex set of the $d$-dimensional cube, where $w_i$ and
    $w_{i+d}$ have the roles played by $w^+_i$ and $w^-_i$, respectively, in
    that example.
    \qed}
  \end{example}

  We postpone the description of the stationary distribution for a general graph
  $\gG$ until Section \ref{sec:appB} (see Proposition \ref{prop:Pi}), where
  more examples also appear.

  \section{Applications to permutation walks}
  \label{sec:appB}

  Throughout this section, $\aA$ stands for the braid arrangement in $\RR^n$.
  A subarrangement of $\aA$ is specified by a simple graph $\gG$ on the node set
  $[n]$. It is first shown that every hyperplane walk on $\aA$ induces a walk on
  the set of acyclic orientations of $\gG$ (Proposition \ref{prop:ao}).
  Specializing to the Tsetlin library walk in Part A gives again the walk on
  acyclic orientations of Example \ref{ex:acyclic}. We give a detailed discussion,
  determining the eigenvalues, stationary distribution and rates of convergence.
  A birth and extinction example shows that the coupling bound (\ref{eq:tot3})
  can be much better than the eigenvalue bound (\ref{eq:tot4}). Part B shows how
  various aspects of a permutation behave after successive riffle shuffles. This
  yields a probabilistic interpretation for the coefficients of the chromatic
  polynomial of a graph. Descents of permutations of various types are also
  treated.

  \medskip
  \noindent
  {\bf Acyclic Orientations.}
  Let $\gG$ be a simple (undirected) graph on the node set $[n]$.
  Given an orientation $\oo$ of $\gG$, a \emph{directed cycle} in $\oo$ is a
  sequence of nodes $(i_0, i_1,\dots,i_k)$
  such that $i_0 \to i_1 \to \cdots \to i_k \to i_0$ in $\oo$. An orientation
  of $\gG$ is \emph{acyclic} if it contains no directed cycles. An acyclic
  orientation $\oo$ of $\gG$ specifies a partial order $\preceq_\oo$ on the
  set $[n]$ by letting $a \preceq_\oo b$ if there exists a directed walk in
  $\oo$ with initial node $a$ and final node $b$. Conversely, every partial
  order $\preceq$ on $[n]$ comes from a graph on the node set $[n]$ in this
  way (for instance, from the Hasse diagram of $\preceq$).

  The acyclic orientations of $\gG$ can be modeled by the chambers of a
  hyperplane arrangement as follows. The \emph{graphical arrangement}
  corresponding to $\gG$ is the subarrangement $\bB$ of the braid arrangement
  $\aA$ in $\RR^n$, consisting of all hyperplanes of the form $x_i - x_j = 0$
  for which $\{i, j\}$ is an edge
  of $\gG$. A chamber $D \in \cC_\bB$ defines an acyclic orientation of $\gG$
  by assigning the direction $j \to i$ to the edge $\{i, j\}$ of $\gG$ if $x_i
  > x_j$ holds for every point $(x_1, x_2,\dots,x_n) \in D$. The resulting
  map is a bijection from the set of chambers $\cC_\bB$ to the set of acyclic
  orientations of $\gG$, henceforth denoted by $\AO (\gG)$; see \cite[Section
  2.3]{Sta3} for a proof and further information. Using this bijection, we may
  identify chambers of $\bB$ with the corresponding acyclic orientations of
  $\gG$.

  It follows from the previous discussion that every hyperplane walk on $\aA$
  induces a Markov chain on the set $\AO (\gG)$, as described in Section
  \ref{sec:main}. We record this conclusion in the following proposition. We
  recall that a permutation $\tau \in \sss_n$ is said to be a \emph{linear
  extention} of a partial order $\preceq$ on $[n]$, if for all $a, b \in [n]$
  with $a \prec b$ we have $\tau^{-1}(a) < \tau^{-1}(b)$, meaning that $a$
  appears before $b$ in the linear ordering $(\tau(1), \tau(2),\dots,\tau(n))$
  associated to $\tau$.

  \begin{proposition} \label{prop:ao}
    Every hyperplane walk on the braid arrangement in $\RR^n$ induces a Markov
    chain on the set $\AO (\gG)$ of acyclic orientations of $\gG$. If the
    original walk has a unique stationary distribution $\pi$, then the
    stationary distribution $\bar{\pi}$ of the induced chain is given by
      \begin{equation} \label{eq:aopi}
        \bar{\pi}(\oo) \ = \ \sum_{\tau \in \eE (\oo)} \ \pi(\tau)
      \end{equation}
    for $\oo \in \AO(\gG)$, where $\eE (\oo)$ is the set of linear extensions
    of the partial order on $[n]$ defined by $\oo$. In particular, if $\pi$
    is the uniform distribution on $\sss_n$, then
      \begin{equation} \label{eq:aoupi}
        \bar{\pi}(\oo) \ = \ \frac{\# \eE (\oo)}{n!}
      \end{equation}
    for every $\oo \in \AO(\gG)$.
  \end{proposition}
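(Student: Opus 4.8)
The plan is to deduce Proposition \ref{prop:ao} from the general machinery of Section \ref{sec:main}, together with the identification between chambers of the graphical arrangement $\bB$ and acyclic orientations of $\gG$. The first statement — that every hyperplane walk on the braid arrangement $\aA$ induces a Markov chain on $\AO(\gG)$ — is essentially immediate: the graphical arrangement $\bB$ is a subarrangement of $\aA$, so by Corollary \ref{cor:sub} the induced process on $\cC_\bB$ is a Markov chain (indeed itself a hyperplane walk), and $\cC_\bB$ has already been identified with $\AO(\gG)$ in the paragraph preceding the proposition. So the real content is the formula for the stationary distribution.

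For the stationary distribution, I would invoke Theorem \ref{cor:main}(iii): if the walk on $\aA$ has a unique stationary distribution $\pi$, then the induced chain on $\cC_\bB$ has stationary distribution $\bar{\pi}$ given by
$$
  \bar{\pi}(D) \ = \ \sum_{C \in \cC_\aA:\, C \subseteq D} \ \pi(C)
$$
for $D \in \cC_\bB$. It then remains to translate this combinatorially. Under the bijection $\cC_\aA \leftrightarrow \sss_n$ fixed in Section \ref{sec:ex}, the chamber associated to $\tau \in \sss_n$ is $\{x_{\tau(1)} > x_{\tau(2)} > \cdots > x_{\tau(n)}\}$; under the bijection $\cC_\bB \leftrightarrow \AO(\gG)$, the chamber $D$ associated to $\oo$ is cut out by the inequalities $x_i > x_j$ for each edge $\{i,j\}$ oriented $j \to i$ in $\oo$. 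The key observation is that the chamber of $\aA$ corresponding to $\tau$ is contained in $D$ if and only if $\tau$ refines the order defining $D$, i.e.\ $x_i > x_j$ on $D$ forces $\tau^{-1}(i) < \tau^{-1}(j)$; and since the partial order $\preceq_\oo$ is exactly the transitive closure of the edge-orientations, this holds precisely when $\tau$ is a linear extension of $\preceq_\oo$. Hence $\{C \in \cC_\aA : C \subseteq D\}$ corresponds bijectively to $\eE(\oo)$, and substituting into the displayed formula yields (\ref{eq:aopi}). The special case (\ref{eq:aoupi}) follows at once by setting $\pi(\tau) = 1/n!$ for all $\tau$.

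The only step requiring genuine care is the equivalence ``chamber of $\tau$ is contained in $D$'' $\iff$ ``$\tau \in \eE(\oo)$''. One direction is clear: if $x_{\tau(1)} > \cdots > x_{\tau(n)}$ implies $x_i > x_j$ whenever $\{i,j\}$ is an edge oriented $j\to i$, then transitivity of $>$ among the coordinates forces $\tau^{-1}(a) < \tau^{-1}(b)$ for every relation $a \prec_\oo b$. The converse — that a linear extension $\tau$ of $\preceq_\oo$ has its open chamber inside the open chamber $D$ — needs the fact that $D$ is the set of points satisfying $x_i > x_j$ for the edges alone (not the transitive closure), so that a linear extension of the edge-relations automatically satisfies all the defining inequalities of $D$; this is exactly the statement, recalled from \cite[Section 2.3]{Sta3}, that $D \mapsto \oo$ is a bijection $\cC_\bB \to \AO(\gG)$, together with the observation that the chambers of $\aA$ lying in $D$ are precisely those obtained by totally ordering the coordinates compatibly with $D$'s inequalities. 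I would state this identification explicitly and then the rest is bookkeeping.
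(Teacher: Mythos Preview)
Your proposal is correct and follows essentially the same route as the paper: invoke Corollary~\ref{cor:sub} for Markovianity, then use equation~(\ref{eq:pis}) from Theorem~\ref{cor:main}(iii) together with the observation that the chamber of $\aA$ corresponding to $\tau$ lies in the chamber of $\bB$ corresponding to $\oo$ if and only if $\tau \in \eE(\oo)$. The paper records this observation in a single sentence, whereas you spell out the two directions more explicitly, but the argument is the same.
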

  \begin{proof}
  Let $\aA$ denote the braid arrangement in $\RR^n$ and $\bB$ denote the
  graphical arrangement corresponding to $\gG$, as before. As already mentioned,
  the first statement follows from the previous discussion and Corollary
  \ref{cor:sub}. The second statement follows from (\ref{eq:pis}) and the
  observation that for chambers $C \in \cC_\aA$ and $D \in \cC_\bB$ corresponding
  to the permutation $\tau \in \sss_n$ and the acyclic orientation $\oo \in \AO
  (\gG)$, respectively, we  have $C \subseteq D$ if and only if $\tau \in \eE
  (\oo)$.
  \end{proof}

  \begin{remark} \label{rem:ao}
  {\rm Acyclic orientations are of importance in various areas of applied
  mathematics, such as computer science, automata theory and statistics. In
  statistical applications they appear as part of the machinery of ``Bayes
  nets" and ``casual models", where they are used to model casual implication
  in complex data sets; some useful references are \cite{Fr, La, GP}.
  Searching for an appropriate model is often done by a random walk on acyclic
  orientations. We hope that our analysis will contribute to the understanding
  of these algorithms. For an introduction to the literature relating acyclic
  orientations to factoring noncommutative polynomials, see \cite{RW}. \qed}
  \end{remark}

  In the remainder of this section we investigate further the Markov chain of
  Proposition \ref{prop:ao} in the special cases of Examples \ref{ex:tsetlin}
  and \ref{ex:a}. Note that the case of Example \ref{ex:tsetlin} is also treated
  by the Markov chain (\ref{eq:otransition}).

  \medskip
  \noindent
  \textbf{A. Tsetlin Library}.
  Let $w_1, w_2,\dots,w_n$ be nonnegative real numbers summing to 1 and let $w$
  be the probability measure on $\fF_\aA$ of Example \ref{ex:tsetlin}. Thus
  the associated hyperplane walk on $\aA$ is the Markov chain on $\sss_n$ which
  selects, at each stage, the entry $i$ in the one line notation of the current
  permutation with
  probability $w_i$ and moves it in front. To describe the induced chain of
  Proposition \ref{prop:ao} on the set $\AO (\gG)$, we observe the following:
  if $C \in \cC_\aA$ is the chamber which corresponds to a given permutation
  $\tau \in \sss_n$ and $D \in \cC_\bB$ is the unique chamber of $\bB$ which
  contains $C$, then the acyclic orientation of $\gG$ corresponding to $D$ is
  the one which orients an edge $\{a, b\}$ of $\gG$ as $b \to a$ if and only if
  $\tau^{-1}(a) < \tau^{-1}(b)$. It follows that the induced chain on $\AO (\gG)$
  proceeds from a given acyclic orientation by selecting the node $i$ of $\gG$
  with probability $w_i$ and reorienting all edges of $\gG$ incident to this
  node towards itself, to reach a new acyclic orientation of $\gG$, leaving the
  orientations of all other edges of $\gG$ unchanged. Equivalently, if $K^*$ is
  the transition matrix of the induced chain on $\AO (\gG)$, then $K^* (\oo,
  \oo')$ is given by the right-hand side of (\ref{eq:otransition})
  for $\oo, \oo' \in \AO (\gG)$.

  Given a subset $T$ of the node set of $\gG$, we denote by $\gG \sm T$ the graph
  obtained from $\gG$ by removing all nodes in $T$ and all incident to them edges
  (in other words, $\gG \sm T$ is the induced subgraph of $\gG$ on the node set
  $[n] \sm T$). Thus $T$ is dominating in $\gG$ if and only if the graph $\gG \sm
  T$ has no edges. The following statements can be added to the conclusions of
  Proposition \ref{prop:ao}.

  \begin{proposition} \label{prop:tsetlin}
    Let $\gG$ be a simple graph on the node set $[n]$ and let $K^*$ be the
    transition matrix of the Markov chain on $\AO (\gG)$ which is induced from the
    Tsetlin library with weights $w_1, w_2,\dots,w_n$.
     \begin{itemize}
       \item[(i)] The matrix $K^*$ is diagonalizable with characteristic
       polynomial given by
         \begin{equation} \label{eq:charAO}
         \det (xI - K^*) \ = \ \prod_{S \subseteq [n]} \ (x - \lambda_S)^{m_S},
         \end{equation}
       where
         \begin{equation} \label{eq:lambda_S}
         \lambda_S \ = \ \sum_{i \in S} \ w_i
         \end{equation}
       and
         \begin{equation} \label{eq:m_S}
         m_S \ = \ \sum_{S \subseteq T \subseteq [n]} \ (-1)^{|T \sm S|} \ \#
               \AO (\gG \sm T)
         \end{equation}
       for $S \subseteq [n]$, where the number of acyclic orientations of the graph
       with empty node set is equal to one, by convention.
       \item[(ii)] $K^*$ has a unique stationary distribution $\bar{\pi}$ if
       and only if there is no edge $\{i, j\}$ of $\gG$ such that $w_i = w_j = 0$.
       Moroever, we have
       $$ \bar{\pi}(\oo) \ = \ \sum_{\tau \in \eE (\oo)} \
         \frac{w_{\tau(1)}w_{\tau(2)} \cdots w_{\tau(n)}}
         {(1 - w_{\tau(1)}) (1 - w_{\tau(1)} - w_{\tau(2)}) \cdots
         (1 - w_{\tau(1)} - \cdots - w_{\tau(n-1)}) } $$
       for $\oo \in \AO (\gG)$, if $w_1, w_2,\dots,w_n$ are all positive. In
       particular, (\ref{eq:aoupi}) holds for every $\oo \in \AO(\gG)$ if $w_1
       = \cdots = w_n = 1/n$.
       \item[(iii)] Assuming that $\bar{\pi}$ exists, the conclusions of Corollary
       \ref{cor:o} (iii) hold if $\pi$ is replaced there by $\bar{\pi}$ and
       $K_\oo^l$ is replaced by the distribution $(K^*_\oo)^l$ of the induced chain
       started from the orientation $\oo \in \AO (\gG)$ after $l$ steps.
     \end{itemize}
  \end{proposition}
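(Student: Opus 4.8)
The approach is to apply Theorem~\ref{cor:main} to the graphical arrangement $\bB$ sitting inside the braid arrangement $\aA$, and then to carry out a M\"obius-inversion bookkeeping step that recasts the eigenvalue data in terms of subsets of $[n]$. \textbf{Part (i).} By Corollary~\ref{cor:sub} the induced chain is the hyperplane walk on $\bB$ with face weights $w^{*}$ from (\ref{eq:w*def}), so Theorem~\ref{cor:main}(i),(ii) already deliver diagonalizability and $\det(xI-K^{*})=\prod_{W\in\lL_\bB}(x-\lambda_W)^{m^{*}_W}$ with $\lambda_W$ as in (\ref{eq:lambdaW}). Each $W\in\lL_\bB$ corresponds to a partition $\sigma_W$ of $[n]$ each of whose blocks induces a connected subgraph of $\gG$; since the only faces of $\aA$ of positive $w$-weight are the $(\{i\},[n]\sm\{i\})$, and such a face lies in $W$ exactly when $\{i\}$ is a block of $\sigma_W$, one computes $\lambda_W=\lambda_S$ in the notation (\ref{eq:lambda_S}), where $S=S(\sigma_W)$ is the set of singleton blocks of $\sigma_W$. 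Collecting equal factors, the multiplicity attached to $\lambda_S$ is $g(S):=\sum_{W\in\lL_\bB:\,S(\sigma_W)=S}|\mu_\bB(V,W)|$, so part (i) reduces to identifying $g(S)$ with the alternating sum (\ref{eq:m_S}).

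For this, put $h(S):=\sum_{T\supseteq S}g(T)=\sum_{W\in\lL_\bB:\,S(\sigma_W)\supseteq S}|\mu_\bB(V,W)|$. The flats of $\bB$ whose partition has every element of $S$ as a singleton are precisely the flats of the graphical arrangement $\bB'$ of $\gG\sm S$ (viewed inside $\RR^{[n]\sm S}$, the $S$-coordinates being free), and this is an isomorphism of intersection posets taking the minimum to the minimum, hence preserving M\"obius values. Therefore $h(S)$ equals the sum of $|\mu_{\bB'}|$ over the whole intersection poset of $\bB'$, which by Zaslavsky's formula (already used in the proof of Theorem~\ref{thm:BHR-BD}(i)) is the number of chambers of $\bB'$, namely $\#\AO(\gG\sm S)$. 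M\"obius inversion over the Boolean lattice then gives $g(S)=\sum_{S\subseteq T\subseteq[n]}(-1)^{|T\sm S|}h(T)=\sum_{S\subseteq T\subseteq[n]}(-1)^{|T\sm S|}\#\AO(\gG\sm T)=m_S$, which is (\ref{eq:m_S}) (and incidentally shows this alternating sum is nonnegative). This regrouping is the step I expect to be the main obstacle; the rest is formal.

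\textbf{Part (ii).} The hyperplanes of $\bB$ are the $x_i-x_j=0$ over edges $\{i,j\}$ of $\gG$, and the face $(\{k\},[n]\sm\{k\})$ fails to be contained in $x_i-x_j=0$ precisely when $k\in\{i,j\}$; hence the separation hypothesis of Theorem~\ref{cor:main}(iii) for $K^{*}$ says exactly that every edge of $\gG$ has an endpoint $k$ with $w_k>0$, i.e.\ there is no edge $\{i,j\}$ with $w_i=w_j=0$. When this holds, (\ref{eq:pis}) combined with Proposition~\ref{prop:ao} gives $\bar{\pi}(\oo)=\sum_{\tau\in\eE(\oo)}\pi(\tau)$, where $\pi$ is the stationary distribution of the Tsetlin library on $\aA$; substituting the Luce formula (\ref{eq:luce}) (using $1-w_{\tau(1)}-\cdots-w_{\tau(n-1)}=w_{\tau(n)}$ to match the stated shape) yields the displayed expression for $\bar{\pi}$, and $w_1=\cdots=w_n=1/n$ specializes it to (\ref{eq:aoupi}).

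\textbf{Part (iii).} Apply Theorem~\ref{cor:main}(iv) to the present walk. A product $F_1F_2\cdots F_l$ of faces $F_i=(\{v_i\},[n]\sm\{v_i\})$ is the ordered partition whose singleton blocks are the distinct $v_i$ and whose only other block is $[n]\sm\{v_1,\dots,v_l\}$; this product lies in $\bigcup_{H\in\bB}H$ iff that last block contains both ends of some edge of $\gG$, i.e.\ iff $\{v_1,\dots,v_l\}$ is not dominating in $\gG$, so (\ref{eq:tot3}) becomes (\ref{eq:otot1}) with $\bar{\pi}$ and $(K^{*}_\oo)^l$ in place of $\pi$ and $K_\oo^l$. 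For the eigenvalue bound, the hyperplane $x_i-x_j=0$ of $\bB$ contributes $\lambda_H=\sum_{k\notin\{i,j\}}w_k=1-w_i-w_j$, so (\ref{eq:tot4}) becomes (\ref{eq:otot2}) with the same substitutions, and $w_1=\cdots=w_n=1/n$ gives (\ref{eq:otot3}). This completes the plan.
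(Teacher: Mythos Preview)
Your proposal is correct and follows essentially the same route as the paper: apply Theorem~\ref{cor:main} to the graphical subarrangement, identify $\lambda_W$ via the singleton blocks of the associated $\gG$-connected partition, group the M\"obius multiplicities by singleton set, and evaluate the resulting upper sums as $\#\AO(\gG\sm T)$ via Zaslavsky. The only cosmetic difference is that where you invoke a direct isomorphism between $\{W:\,S(\sigma_W)\supseteq S\}$ and $\lL_{\gG\sm S}$ to transfer M\"obius values, the paper phrases this step through the multiplicativity of the M\"obius function over the product decomposition of $[\hat{0},\sigma]$; the content is the same.
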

  \begin{proof}
  We first recall from \cite[Section 2.3]{Sta3} the following description of
  the intersection poset $\lL_\bB$ of the graphical arrangement $\bB \subseteq
  \aA$ corresponding to $\gG$. A set partition $\pi$ of $[n]$ is said to be
  \emph{$\gG$-connected} if the induced subgraph of $\gG$ on each block of
  $\pi$ is connected. We denote by $\lL_\gG$ the set of $\gG$-connected partitions
  of $\gG$, ordered by refinement. This poset, known as the ``bond lattice", or
  ``lattice of contractions" of $\gG$, is isomorphic to $\lL_\bB$, where the
  isomorphism is induced by the one between the lattice of partitions of $[n]$
  and $\lL_\aA$, discussed in Section \ref{sec:ex}.

  Given $W \in \lL_\bB$, we write $\lambda_\sigma$ for the eigenvalue $\lambda_W$
  of $K^*$ which appears in Theorem \ref{cor:main}, where $\sigma \in \lL_\gG$
  is the $\gG$-connected partition corresponding to $W$. The definition of the
  measure $w$ on $\fF_\aA$ of Example \ref{ex:tsetlin} and the definition of
  $\lambda_W$ in (\ref{eq:lambdaW}) imply that
    \begin{equation} \label{eq:lamdasigma}
      \lambda_\sigma \ = \ \sum_{\{i\} \in \sigma} \ w_i,
    \end{equation}
  where the sum runs over all singleton blocks $\{i\}$ of $\sigma \in \lL_\gG$.
  Theorem \ref{cor:main} (i) gives
    $$\det (xI - K^*) \ = \ \prod_{\sigma \in \lL_\gG} \ (x -
      \lambda_\sigma)^{m^*_\sigma},$$
  where $m^*_\sigma = |\mu_\gG (\hat{0}, \sigma)|$ and $\mu_\gG$ is the M\"obius
  function of $\lL_\gG$. The previous two equations imply that (\ref{eq:charAO})
  holds if we define
    $$ m_S \ = \ \sum_{\sigma \in \lL_\gG: \ \sing(\sigma) = S} \
    |\mu_\gG (\hat{0}, \sigma)|, $$
  where $\sing(\sigma) = \{i \in [n]: \{i\} \in \sigma\}$ denotes the set of
  singleton blocks of $\sigma$. To complete the proof of part (i), it remains to
  prove (\ref{eq:m_S}). By inclusion-exclusion we can write
    \begin{equation} \label{eq:m_S2}
      m_S \ = \ \sum_{S \subseteq T \subseteq [n]} \ (-1)^{|T \sm S|} \ n_T,
    \end{equation}
  where
    $$ n_T \ = \ \sum_{\sigma \in \lL_\gG: \  T \subseteq \sing(\sigma)} \
    |\mu_\gG (\hat{0}, \sigma)|. $$
  Clearly, writing $\sigma = \{B_1, B_2,\dots,B_k\}$, the closed interval $[\hat{0},
  \sigma]$ in $\lL_\gG$ is isomorphic to the direct product of the lattices
  $\lL_{\gG_i}$ for $1 \le i \le k$, where $\gG_i$ is the induced subgraph of $\gG$
  on the node set $B_i$. It follows easily from this observation and the
  multiplicativity of the M\"obius function \cite[Proposition 3.8.2]{StaV1} that
    \begin{equation} \label{eq:n_T}
      n_T \ = \ \sum_{\sigma \in \lL_{\gG \sm T}} \ |\mu_{\gG \sm T} (\hat{0},
                \sigma)|.
    \end{equation}
  By Zaslavsky's formula \cite[Theorem 2.5]{Sta3} \cite{Za}, the right-hand side of
  (\ref{eq:n_T}) is equal to the number of chambers of the graphical arrangement
  corresponding to $\gG \sm T$ and hence to the number of acyclic orientations of
  $\gG \sm T$. Thus (\ref{eq:m_S}) follows from (\ref{eq:m_S2}) and (\ref{eq:n_T}).

  Part (ii) follows from Theorem \ref{cor:main} (iii), equation (\ref{eq:luce})
  and Proposition \ref{prop:ao}.

  Part (iii) follows from Theorem \ref{cor:main} (iv), since a product of faces of
  $\aA$ corresponding to ordered partitions of the form $(i, [n] \sm \{i\})$ is not
  contained in any of the hyperplanes of $\bB$ if and only if the corresponding set
  of nodes $v_i$ is dominating in $\gG$ and since $\lambda_H = 1 - w_i - w_j$ holds
  for the hyperplane $H$ of $\bB$ corresponding to the edge $\{i, j\}$ of $\gG$.
  \end{proof}

  Part (ii) of Proposition \ref{prop:tsetlin} and the following statement determine
  the stationary distribution of the Markov chain on the set ${\rm O} (\gG)$ of all
  orientations of $\gG$, discussed in part C of Section \ref{sec:appA}. We note that
  if $\gG$ is a forest, then every orientation of $\gG$ is acyclic and hence the two
  Markov chains on ${\rm O} (\gG)$ and $\AO (\gG)$ coincide.

  \begin{proposition} \label{prop:Pi}
  Consider the chain on the set ${\rm O} (\gG)$ of all orientations and the chain on
  the set $\AO (\gG)$ of acyclic orientations of $\gG$, with weights $w_1,
  w_2,\dots,w_n$. Assuming there is no edge $\{i, j\}$ of $\gG$ such that $w_i = w_j
  = 0$, their respective stationary distributions $\Pi$ and $\bar{\pi}$ are related by
    \begin{equation} \label{eq:Pipi}
      \Pi (\oo) \ = \ \begin{cases} \bar{\pi} (\oo), & \text{if $\oo$ is acyclic}\\
                                     0, & \text{otherwise} \end{cases}
    \end{equation}
  for $\oo \in {\rm O} (\gG)$.
  \end{proposition}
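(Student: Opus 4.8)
The plan is to guess the answer and verify stationarity, then invoke uniqueness. Let $\Pi'$ be the measure on ${\rm O}(\gG)$ that assigns $\bar{\pi}(\oo)$ to each acyclic orientation $\oo$ and $0$ to every non-acyclic one; this is a probability measure precisely because $\bar{\pi}$ is supported on $\AO(\gG)$. By Corollary \ref{cor:o}(ii), the chain (\ref{eq:otransition}) on ${\rm O}(\gG)$ has a \emph{unique} stationary distribution $\Pi$ under exactly the hypothesis assumed here, so it will suffice to check $\Pi' K = \Pi'$, where $K$ is the transition matrix (\ref{eq:otransition}). The claimed formula (\ref{eq:Pipi}) is then $\Pi = \Pi'$.

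The only geometric input is that for each node $i$ the reorientation map $\rho_i$ carries $\AO(\gG)$ into itself. This is immediate: if $\oo$ is acyclic but $\rho_i(\oo)$ has a directed cycle, then the cycle either avoids $i$ --- in which case it uses no edge reoriented by $\rho_i$ and is already a directed cycle of $\oo$ --- or it passes through $i$ and hence leaves $i$ along some edge, contradicting that $\rho_i$ orients every edge at $i$ towards $i$. I would record two consequences. First, $K(\oo,\oo') = 0$ whenever $\oo \in \AO(\gG)$ and $\oo' \notin \AO(\gG)$. Second, comparing (\ref{eq:otransition}) with the transition rule of the induced chain described in Part A of this section, $K$ and the transition matrix $K^*$ of the chain on $\AO(\gG)$ agree on every pair of acyclic orientations (both implement "pick $i$ with probability $w_i$ and apply $\rho_i$").

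With this in hand, $\Pi' K = \Pi'$ is a short case split. If $\oo'$ is not acyclic, then every term $\bar{\pi}(\oo) K(\oo,\oo')$ in $(\Pi' K)(\oo')$ vanishes --- the only $\oo$ with $\Pi'(\oo)>0$ are acyclic, and those cannot reach $\oo'$ --- matching $\Pi'(\oo') = 0$. If $\oo'$ is acyclic, then, since $\Pi'$ is supported on $\AO(\gG)$ and $K = K^*$ there, $(\Pi' K)(\oo') = \sum_{\oo \in \AO(\gG)} \bar{\pi}(\oo) K^*(\oo,\oo') = \bar{\pi}(\oo') = \Pi'(\oo')$, because $\bar{\pi}$ is stationary for $K^*$ by Proposition \ref{prop:tsetlin}(ii), whose hypothesis coincides with the present one. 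Hence $\Pi'$ is stationary and uniqueness forces $\Pi = \Pi'$.

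I do not anticipate a genuine obstacle: the substance is entirely in the closure property $\rho_i(\AO(\gG)) \subseteq \AO(\gG)$, and the rest is bookkeeping. One could argue more conceptually that $\AO(\gG)$ is an absorbing set for the chain on ${\rm O}(\gG)$ which is reached with probability one from any starting orientation --- under the hypothesis the support $\{i : w_i > 0\}$ is a dominating set of $\gG$, and applying the reorientations along any sequence of nodes that exhausts a dominating set produces an acyclic orientation (namely one compatible with the linear order of nodes by last occurrence in the sequence) --- so $\Pi$ must be supported on $\AO(\gG)$ and must agree with $\bar{\pi}$ there; but the direct verification above is shorter and self-contained.
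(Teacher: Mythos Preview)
Your proof is correct. Both your argument and the paper's rest on the same observation --- that each $\rho_i$ preserves $\AO(\gG)$ --- but deploy it differently. The paper argues via convergence: since $\Pi(\oo) = \lim_{l\to\infty} K^l(\oo_*,\oo)$ independently of the starting state $\oo_*$, one simply takes $\oo_* \in \AO(\gG)$; the chain then never leaves $\AO(\gG)$, so the limit vanishes off $\AO(\gG)$ and equals $\lim_{l\to\infty}(K^*)^l(\oo_*,\cdot)=\bar{\pi}$ on it. Your route --- propose $\Pi'$, verify $\Pi' K = \Pi'$ via the case split, and invoke uniqueness from Corollary~\ref{cor:o}(ii) --- is a bit longer but more explicit about the identification of $K$ with $K^*$ on $\AO(\gG)$, and it does not appeal to pointwise convergence of $K^l$. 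The alternative you sketch in your final paragraph (that $\AO(\gG)$ is an absorbing set reached almost surely) is actually closer in spirit to the paper's one-line proof, except that the paper only uses the weaker fact that $\AO(\gG)$ is invariant, not that it is eventually reached from every starting orientation.
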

  \begin{proof}
  We denote by $K$ the transition matrix of the chain on ${\rm O} (\gG)$ and recall
  that $\Pi (\oo) = \lim_{l \to \infty} K^l(\oo_*, \oo)$ for $\oo \in {\rm O} (\gG)$,
  where the limit is independent of the starting orientation $\oo_* \in {\rm O} (\gG)$.
  Choosing $\oo_* \in \AO (\gG)$, all orientations in the chain stay in $\AO (\gG)$
  and the limit becomes equal to the right-hand side of (\ref{eq:Pipi}).
  \end{proof}

  \begin{example} \label{ex:aoK}
  {\rm Fix an integer $1 \le k \le n$ and let $\gG$ be the graph with edges $\{i, j\}$
  for $1 \le i < j \le k$. The set $\AO (\gG)$ can be identified with the subgroup
  $\sss_k$ of permutations in $\sss_n$ which fix the set $\{k+1,\dots,n\}$ pointwise
  and the induced chain is the process which records the relative ordering of $\{1,
  2,\dots,k\}$, when $\tau \in \sss_n$ evolves as in the Markov chain of Example
  \ref{ex:tsetlin}. The eigenvalues and stationary distribution $\bar{\pi}$ of the
  transition matrix $K^*$ can be easily deduced from those of the transition matrix $K$
  of the parent chain, since in this case $K^*$ differs by a multiple of the identity
  matrix from the restriction of $K$ on $\sss_k$. For instance, $K^*$ has eigenvalues
  (\ref{eq:phat}), one for each $\tau \in \sss_k$. By Proposition \ref{prop:tsetlin}
  (iii), the bound from (\ref{eq:otot2}) applies and gives
    $$ \| (K^*_\tau)^l - \bar{\pi} \|_\TV \ \le \ \sum_{1 \le i < j \le k} \
         (1 - w_i - w_j)^l. $$
  In particular, if $w_1 = \cdots = w_k = 1/k$, so that $\bar{\pi}$ is uniform, then
    \begin{equation} \label{eq:tse4}
      \| (K^*_\tau)^l - \bar{\pi} \|_\TV \ \le \ {k \choose 2} \left(1 - \frac{2}{n}
         \right)^l,
    \end{equation}
  which is an improvement over (\ref{eq:tse2}). Thus the left-hand side of (\ref{eq:tse4})
  is bounded above by $e^{-2c}/2$ if $l \ge n (\log k + c)$.

  The bound (\ref{eq:tse4}) is quite sharp across the whole range of $k$. For instance,
  if $k=2$ it shows that $l$ must grow as $cn$, with $c$ approaching infinity. This is
  correct since if $c$ stays bounded, then there is a nonzero chance that neither 1 nor
  2 has been moved, and thus that they have stayed in their original relative order. At
  the other extreme, we have already commented in our discussion of (\ref{eq:tse2})
  that the bound is sharp if $k=n$. Similar remarks hold for other values of $k$.
  \qed}
  \end{example}

  The following example gives a concrete case in which the bound of (\ref{eq:tot3})
  is better than that of (\ref{eq:tot4}).

  \begin{example} [Birth and Extinction]  \label{ex:aostar}
    {\rm Let $\gG$ be the graph with edges $\{i, n\}$ for $1 \le i \le m$, where
    $m \le n-1$ is a positive integer, and choose weights $w_1 = \cdots =
    w_n = 1/n$. Every orientation of $\gG$ is acyclic and hence the set $\AO (\gG)$
    can be identified with $\{-, +\}^m$, as described in part C of Section
    \ref{sec:appA}. The chain proceeds, at each stage, from a sign vector $x
    \in \{-, +\}^m$ by picking a coordinate $i$ uniformly at random and switching
    this coordinate to $+$, if $1 \le i \le m$, leaving $x$ unchanged, if $m+1 \le
    i < n$, and switching all coordinates of $x$ to $-$, if $i=n$, to reach a new
    sign vector. Such processes are studied in mathematical genetics with many
    variations.

    From our current point of view, we may think of this chain as the process which
    records the subset of $[m]$ consisting of those integers which precede $n$ in the
    current permutation $\tau$, when $\tau \in \sss_n$ evolves as in the Markov chain
    of Example \ref{ex:tsetlin} with uniform weights (random to top model). This is
    because a number $i \in [m]$ precedes $n$ in some (equivalently, every) linear
    extension of the orientation $\oo$ if and only if the edge $\{i, n\}$ is directed
    as $n \to i$ in $\oo$.

  \begin{proposition} \label{prop:aostar}
    Consider the Markov chain on $\AO (\gG)$, which is induced from the Tsetlin library
    with uniform weights, as a chain on the set $\{-, +\}^m$ and let $K^*$ be its
    transition matrix.
     \begin{itemize}
       \item[(i)] The matrix $K^*$ is diagonalizable with eigenvalues
         $$ \begin{cases} 1, & \text{with multiplicity one}\\
             \frac{n-j-1}{n}, & \text{with multiplicity ${m \choose j}$, for
                               $1 \le j \le m$.} \end{cases} $$
       \item[(ii)] The stationary distribution of $K^*$ is given by
         \begin{equation} \label{eq:pix}
           \bar{\pi} (x) \ = \ \frac{1}{(m+1) {m \choose k}}
         \end{equation}
       for $x \in \{-, +\}^m$, where $k$ is the number of coordinates of $x$ equal
       to $+$.
       \item[(iii)] Assume $m = n-1$ and let $(K^*_x)^l$ be the distribution of the
       chain started from $x$ after $l$ steps. We have
         \begin{equation} \label{eq:ostar-tot}
         \| (K^*_x)^l - \bar{\pi} \|_\TV \ \le \ \left(1 - \frac{1}{n} \right)^l \
         \le \ e^{-c}
         \end{equation}
       for $l \ge cn$ and $c>0$. Moreover this bound is sharp, in the sense that
       there exists $0 < \theta < 1$ such that $\| (K^*_x)^n - \bar{\pi} \|_\TV \ge
       \theta$ for all large $n$.
     \end{itemize}
  \end{proposition}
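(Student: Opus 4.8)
The plan is to recognize the chain as the induced chain of Proposition \ref{prop:tsetlin} for the star graph $\gG$ with edges $\{i,n\}$, $1\le i\le m$, and uniform weights $w_1=\cdots=w_n=1/n$, and then read off each part from the general machinery. For part (i), I would apply Proposition \ref{prop:tsetlin}(i) and compute the multiplicities $m_S$ from (\ref{eq:m_S}): since $\gG\sm T$ is itself a star (on node $n$, if $n\notin T$) or edgeless (if $n\in T$), the number of acyclic orientations $\#\AO(\gG\sm T)$ is $2^{|[m]\sm T|}$ when $n\notin T$ and $1$ when $n\in T$. Plugging into $m_S=\sum_{S\subseteq T}(-1)^{|T\sm S|}\#\AO(\gG\sm T)$ and splitting the sum according to whether $T$ contains $n$, the alternating sums collapse (this is the main — though routine — computation) and yield $m_S=0$ unless $S=[m+1]=[n]$ (giving eigenvalue $1$ with multiplicity one, since then $\lambda_S=\sum_{i\in[n]}w_i=1$) or $S$ avoids $n$ with $|S|=m-j$ for some $1\le j\le m$; by (\ref{eq:lambda_S}), $\lambda_S=\sum_{i\in S}1/n=(m-j)/n$, and when $m=n-1$ this is $(n-j-1)/n$. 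A little care is needed for the case $m<n-1$, where node $n$ has neighbors only among $[m]$ but nodes $m+1,\dots,n-1$ are isolated; these contribute singleton blocks that are always present, so the effective bond lattice is that of the star on $m+1$ nodes, giving the same eigenvalue list. Diagonalizability is immediate from Proposition \ref{prop:tsetlin}(i).

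For part (ii), I would use Proposition \ref{prop:tsetlin}(ii) together with the description $\bar\pi(\oo)=\#\eE(\oo)/n!$ for uniform weights (equation (\ref{eq:aoupi})). An orientation $\oo$ of the star, identified with $x\in\{-,+\}^m$ having $k$ coordinates equal to $+$, corresponds to a partial order on $[n]$ in which exactly $k$ of the leaves $\{1,\dots,m\}$ lie above $n$ and $m-k$ lie below $n$ (the remaining $n-1-m$ nodes, if any, incomparable to everything); counting its linear extensions is a straightforward composition-counting problem giving $\#\eE(\oo)=n!/\bigl((m+1)\binom{m}{k}\bigr)$ — alternatively one can argue directly that under random-to-top the stationary chance of a configuration depends only on $k$, and normalize. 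This yields (\ref{eq:pix}).

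For part (iii), with $m=n-1$ every node except possibly $n$ is a leaf and every node of $\gG$ lies in the single edge-structure, so by Proposition \ref{prop:tsetlin}(iii) (the analogue of Corollary \ref{cor:o}(iii)) the coupling bound (\ref{eq:otot1}) says $\|(K^*_x)^l-\bar\pi\|_\TV$ is at most the probability that $\{v_1,\dots,v_l\}$ is not dominating in $\gG$; since $\gG$ is a star with center $n$, the only way to fail to dominate is to never pick the center $n$, which happens with probability $(1-1/n)^l$, giving (\ref{eq:ostar-tot}). The hard part is the matching lower bound. Here I would exhibit a test set (equivalently a coordinate statistic) separating $(K^*_x)^n$ from $\bar\pi$: for instance, start from $x=(+,+,\dots,+)$ (all leaves above $n$) and consider the event $A$ that $n$ has never been selected in the first $n$ steps, so the chain still sits at the all-$+$ state; then $(K^*_x)^n(A)\ge(1-1/n)^n\to e^{-1}$, while $\bar\pi(\text{all-}+)=1/(m+1)=1/n\to 0$. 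More carefully, one should take $A$ to be the set of $x$ with all or nearly all coordinates $+$ (a constant-probability event under the conditional-on-not-hitting-$n$ dynamics, which only ever flips coordinates to $+$ except at a center-hit) so that $(K^*_x)^n(A)$ stays bounded below by a constant while $\bar\pi(A)=\sum_{k\ge k_0}1/((m+1)\binom{m}{k})\to 0$; the difference then exceeds some fixed $\theta>0$ for all large $n$. This lower-bound argument, and in particular choosing $A$ so that both its transient probability is bounded below and its stationary probability is $o(1)$, is the only genuinely delicate point; everything else follows mechanically from Proposition \ref{prop:tsetlin} and Corollary \ref{cor:o}.
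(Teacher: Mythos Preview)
Your plan for parts (i), (ii), and the upper bound in (iii) is the paper's: the paper cites Corollary~\ref{cor:o}(i) (equivalently the M\"obius computation via equation~(\ref{eq:lamdasigma}), all M\"obius values being $\pm 1$ for a star) for the eigenvalues, counts linear extensions as $\binom{n}{m+1}\,k!(m-k)!(n-m-1)!$ for (ii), and uses the domination bound~(\ref{eq:otot1}) for the upper bound in (iii). One slip in your write-up of (i): for $m<n-1$, the sets $S\subseteq[n]$ with $m_S\ne 0$ in~(\ref{eq:m_S}) are not those avoiding $n$ with $|S|=m-j$; they must also contain the isolated nodes $m+1,\dots,n-1$, and these then contribute to $\lambda_S$ and shift it from $(m-j)/n$ to $(n-1-j)/n$ as stated in the proposition. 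Your bond-lattice remark points the right way, but ``same bond lattice'' does not by itself give ``same eigenvalue list'', precisely because the extra singleton blocks enter $\lambda_\sigma$.

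Your lower-bound argument in (iii) is genuinely different from the paper's and simpler. The paper starts from $x=(-,\dots,-)$, takes $A=\{y:\text{at least }(n-1)/2\text{ coordinates of }y\text{ equal }+\}$, notes $\bar\pi(A)\ge 1/2$ by symmetry, and computes the expected number of $+$ coordinates after $n$ steps to be $\frac{n-1}{2}\bigl(1-(1-2/n)^n\bigr)\sim\frac{n-1}{2}(1-e^{-2})$, concluding $(K^*_x)^n(A)\to 0$. Your singleton test already suffices without the ``more carefully'' hedge: starting from $x=(+,\dots,+)$, the chain stays at $x$ whenever the center $n$ is never chosen, so $(K^*_x)^n(\{x\})\ge(1-1/n)^n\to e^{-1}$, while $\bar\pi(\{x\})=1/n\to 0$, giving $\|(K^*_x)^n-\bar\pi\|_\TV\ge e^{-1}-o(1)$. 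There is no need to enlarge $A$.
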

  \begin{proof}
  Part (i) follows from Corollary \ref{cor:o} (i). Alternatively, it follows from
  the proof of Proposition \ref{prop:tsetlin} (i) and, in particular, equation
  (\ref{eq:lamdasigma}), since all values of the M\"obius function $\mu_\gG$ in this
  case have absolute value 1. Part (ii) follows from Proposition \ref{prop:tsetlin}
  (ii) and equation (\ref{eq:aoupi}), which applies in our situation, since the
  number of linear extensions of the poset on $[n]$ defined by any orientation of
  $\gG$ with $k$ edges pointing away from $n$ is equal to ${n \choose m+1} \, k!
  (m-k)! (n-m-1)!$.

  Assuming that $m=n-1$, (\ref{eq:ostar-tot}) follows from the bound given in
  (\ref{eq:otot1}), since a dominating set in $\gG$ is formed as soon as node $n$
  is picked and the chance that this has not happen in the first $l$ steps is equal
  to $(1 - 1/n)^l$. Finally, suppose that the starting sign vector $x$ has all its
  coordinates equal to $-$ and let $A$ be the set of all $y \in \{-, +\}^{n-1}$ having
  at least $(n-1)/2$ coordinates equal to $+$. An elementary calculation shows
  that after $n$ steps in the chain, the expected number of $+$ coordinates is equal
  to
    $$ \frac{n-1}{2} \left( 1 - \left(1 - \frac{2}{n} \right)^n \right) \ \sim \
       \frac{n-1}{2} \left( 1 - \frac{1}{e^2} \right). $$
  It follows that $(K^*_x)^n (A) \to 0$ as $n \to \infty$, while clearly $\bar{\pi}
  (A) \ge 1/2$. Since the total variation distance $\| (K^*_x)^n - \bar{\pi}
  \|_\TV$ is bounded below by $|(K^*_x)^n (A) - \bar{\pi} (A) |$, we conclude that
  given any $0 < \theta < 1/2$ we have $\| (K^*_x)^n - \bar{\pi} \|_\TV \ge \theta$
  for $n$ large enough. This completes the proof of part (iii). A similar argument
  works for all $2 \le m \le n-1$.
  \end{proof}
  }
  \end{example}

  \begin{example} [Descent Set]  \label{ex:aopath}
    {\rm Let $\gG$ be the path with edges $\{i, i+1\}$ for $1 \le i \le n-1$ and
    choose weights $w_1 = \cdots = w_n = 1/n$. Once again, every orientation of
    $\gG$ is acyclic and hence the set $\AO (\gG)$ can be identified with the
    set of sign vectors $\{-, +\}^{n-1}$. We leave it to the reader to give a
    description of the evolution of this chain on the set $\{-, +\}^{n-1}$
    similar to that of Example \ref{ex:aostar}.

    We find it more convenient to identify $\AO (\gG)$ with the set of subsets
    of $[n-1]$, where an orientation $\oo$ of $\gG$ is identified with the set
    of indices $i \in [n-1]$ for which the edge $\{i, i+1\}$ is directed as $i
    \to i+1$ in $\oo$. We denote by $\bB$ the graphical arrangement associated to
    $\gG$, as usual, and recall that there is a directed edge $i \to i+1$ in $\oo$
    if and only if $x_i < x_{i+1}$ holds in the chamber $D$ of $\bB$ corresponding
    to $\oo$. In turn, this happens if and only if $i+1$ precedes $i$ in any of
    the permutations $\tau$ which correspond to chambers of $\aA$ contained in $D$
    or, equivalently, if and only if $i$ belongs to the descent set
      $$ \Des (\tau^{-1}) \ = \ \{i \in [n-1]: \tau^{-1}(i) > \tau^{-1}(i+1)\} $$
    of the inverse permutation $\tau^{-1}$. Therefore, our chain on the set of
    subsets of $[n-1]$ is the process which records the descent set $\Des (\tau^{-1})$,
    when $\tau \in \sss_n$ evolves as in the Markov chain of Example
    \ref{ex:tsetlin} with uniform weights. We recall that a \emph{composition} of
    $n$ is an ordered sequence of positive integers (called parts) which sum to $n$.

  \begin{proposition} \label{prop:aopath}
  Consider the Markov chain on $\AO (\gG)$, which is induced from the Tsetlin library
  with uniform weights, as a chain on the set of subsets of $[n-1]$ and let $K^*$ be
  its transition matrix.
     \begin{itemize}
       \item[(i)] The matrix $K^*$ is diagonalizable with eigenvalues $j/n$
         for $j \in \{0, 1,\dots,n-2\} \cup \{n\}$, where the multiplicity of $j/n$ is
         equal to the number of compositions of $n$ having exactly $j$ parts equal to 1.
       \item[(ii)] The stationary distribution of $K^*$ is given by
         \begin{equation} \label{eq:piDes}
           \bar{\pi} (S) \ = \ \frac{1}{n!} \ \# \, \{\tau \in \sss_n: \Des (\tau)
           = S \}
         \end{equation}
       for every $S \subseteq [n-1]$.
       \item[(iii)] We have
         $$ \| (K^*_S)^l - \bar{\pi} \|_\TV \ \le \ (n-1) \left(1 - \frac{2}{n}
            \right)^l, $$
       where $(K^*_S)^l$ is the distribution of the chain started from $S$ after $l$
       steps.
     \end{itemize}
  \end{proposition}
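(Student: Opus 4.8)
The plan is to derive all three parts from Proposition~\ref{prop:tsetlin} by specializing to the path $\gG$ with edges $\{i,i+1\}$, $1\le i\le n-1$; the only substantive work is to describe the bond lattice $\lL_\gG$, its M\"obius function, and the eigenvalues $\lambda_\sigma$ appearing in (\ref{eq:charAO}) and (\ref{eq:lamdasigma}).

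For part~(i), I would first observe that a set partition of $[n]$ is $\gG$-connected precisely when each block is a set of consecutive integers, so $\lL_\gG$ is naturally identified with the set of compositions $\mathbf{c}=(c_1,c_2,\dots,c_k)$ of $n$, the $c_i$ being the block sizes listed left to right. Since the edges of a forest form an independent set in the graphic matroid, the intersection poset $\lL_\gG$ is isomorphic to the Boolean lattice of rank $n-1$; equivalently, every interval $[\hat 0,\sigma]$ is a product of bond lattices of subpaths, each itself a Boolean lattice, so by multiplicativity of the M\"obius function $|\mu_\gG(\hat 0,\sigma)|=1$ for all $\sigma$. Hence in (\ref{eq:charAO}) each $\sigma\in\lL_\gG$ contributes a single factor $(x-\lambda_\sigma)$, and by (\ref{eq:lamdasigma}) with $w_1=\cdots=w_n=1/n$ we have $\lambda_\sigma=j/n$, where $j=|\sing(\sigma)|$ is the number of singleton blocks of $\sigma$, i.e.\ the number of parts of $\mathbf{c}$ equal to $1$. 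Collecting factors by the value of $j$ yields the claimed multiplicities; the index set $\{0,1,\dots,n-2\}\cup\{n\}$ comes out because a composition of $n$ cannot have exactly $n-1$ parts equal to $1$ (the remaining part would then also be $1$), while the all-ones composition is the unique one with $j=n$.

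For part~(ii), I would use Proposition~\ref{prop:ao}: with uniform weights the induced chain has stationary distribution $\bar\pi(\oo)=\#\eE(\oo)/n!$, as in (\ref{eq:aoupi}). Under the identification of $\AO(\gG)$ with the subsets of $[n-1]$ recalled just before the proposition, the chamber of $\bB$ attached to an orientation $\oo\leftrightarrow S$ contains exactly those chambers of $\aA$ indexed by permutations $\tau$ with $\Des(\tau^{-1})=S$, so $\eE(\oo)=\{\tau\in\sss_n:\Des(\tau^{-1})=S\}$; applying the inversion bijection $\tau\mapsto\tau^{-1}$ of $\sss_n$ turns this into $\{\tau\in\sss_n:\Des(\tau)=S\}$ without changing its size, which is precisely (\ref{eq:piDes}).

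For part~(iii), I would invoke Proposition~\ref{prop:tsetlin}~(iii) and specialize the bound inherited from (\ref{eq:otot2}): the path has edge set $\{\{i,i+1\}:1\le i\le n-1\}$ and $w_i=1/n$ for all $i$, so each of the $n-1$ summands is $(1-2/n)^l$, giving the stated inequality. I do not anticipate a real obstacle; the only delicate points are verifying that $|\mu_\gG|\equiv 1$ for the bond lattice of the path in part~(i) and, in part~(ii), keeping the descent-set bookkeeping straight — the chain tracks $\Des(\tau^{-1})$ while the stationary weights are naturally phrased through $\Des(\tau)$, the two being reconciled by inversion.
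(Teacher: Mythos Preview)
Your proposal is correct and follows essentially the same route as the paper: part~(i) via the eigenvalue description in Proposition~\ref{prop:tsetlin} together with the fact that the bond lattice of the path is Boolean (so all $|\mu_\gG|=1$), part~(ii) via (\ref{eq:aoupi}) and the identification of $\eE(\oo)$ with permutations having $\Des(\tau^{-1})=S$ (reconciled with $\Des(\tau)$ by inversion), and part~(iii) via the edge bound (\ref{eq:otot2})/(\ref{eq:otot3}) specialized to the $n-1$ edges of the path. You supply a bit more detail than the paper does, but the argument is the same.
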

  \begin{proof}
  Part (i) follows once again from Corollary \ref{cor:o} (i), or from the proof of
  Proposition \ref{prop:tsetlin} (i). For part (ii) it suffices to note that given
  $S \subseteq [n-1]$ with corresponding orientation $\oo \in \AO(\gG)$, the set of
  linear extensions of the partial order on $[n]$ defined by $\oo$ is in bijection
  with the set of elements of $\sss_n$ with descent set equal to $S$, as already
  discussed before the statement of the proposition. Then (\ref{eq:piDes}) follows
  from (\ref{eq:aoupi}) and Proposition \ref{prop:tsetlin} (ii). Part (iii) is
  a consequence of (\ref{eq:otot3}).
  \end{proof}

  Part (iii) of this proposition implies that for $c>0$, the distance $\| (K^*_S)^l
  - \bar{\pi} \|_\TV$ is bounded above by $e^{-c}$ if $l \ge \frac{n}{2} (\log n +
  c)$. This can be shown to be sharp, in the sense of Proposition \ref{prop:aostar}
  (iii), by an argument similar to the one in the proof of this proposition.
  \qed}
  \end{example}

  \begin{example} [Cyclic Descent Set]  \label{ex:aocycle}
    {\rm For notational convenience, in this example we replace the node set $[n]$ by
    the abelian group $\ZZ_n$ of integers modulo $n$. We let $\gG$ be the cycle with
    edges $\{i, i+1\}$ for $1 \le i \le n$ and choose weights $w_1 = \cdots = w_n
    = 1/n$. Since there are exactly two orientations of $\gG$ which have a directed
    cycle, the number of acyclic orientations of $\gG$ is equal to $2^n - 2$. We
    may identify $\AO (\gG)$ with the set of proper subsets of $[n]$, where an acyclic
    orientation $\oo$ of $\gG$ corresponds to the set of indices $i \in [n]$ for which
    the edge $\{i, i+1\}$ is directed as $i \to i+1$ in $\oo$. Arguing as in Example
    \ref{ex:aopath}, we see that this chain on the set of proper subsets of $[n]$ is
    the process which records the cyclic descent set
     $$ \cDes (\tau^{-1}) \ = \ \{i \in [n]: \tau^{-1}(i) > \tau^{-1}(i+1)\} $$
    when $\tau \in \sss_n$ evolves as in the Markov chain of Example \ref{ex:tsetlin}
    with uniform weights. Cyclic descents of permutations were introduced by Cellini
    \cite{Ce} and further studied by Fulman; see \cite{Fu} and references therein.

    The bond lattice $\lL_\gG$ is isomorphic to the set of subsets of $[n]$, other
    than those of cardinality $n-1$, partially ordered by inclusion. Since this
    lattice has a well known M\"obius function, one can deduce easily from
    (\ref{eq:lamdasigma}) the following description of the eigenvalues of the
    transition matrix $K^*$ of this chain. They are the numbers $j/n$ for $j
    \in \{0, 1,\dots,n\}$ and for $j \ge 1$, the multiplicity of $j/n$ is equal to the
    number of set partitions of $\ZZ_n$ into blocks of the form $\{a, a+1,\dots,b\}$
    having exactly $j$ singleton blocks. The multiplicity of zero is two less
    than the number of such partitions of $\ZZ_n$ having no singleton block. Arguing
    as in Example \ref{ex:aopath}, we find that the stationary distribution of $K^*$
    is given by
         \begin{equation} \label{eq:picDes}
           \bar{\pi} (S) \ = \ \frac{1}{n!} \ \# \, \{\tau \in \sss_n: \cDes (\tau)
           = S \}
         \end{equation}
    for proper subsets $S \subseteq [n]$ and that
      \begin{equation} \label{eq:tot5}
        \| (K^*_S)^l - \bar{\pi} \|_\TV \ \le \ n \left(1 - \frac{2}{n} \right)^l,
      \end{equation}
    where $(K^*_S)^l$ is the distribution of the chain started from $S$ after $l$
    steps. As in Example \ref{ex:aopath}, it follows that  $\| (K^*_S)^l - \bar{\pi}
    \|_\TV$ is bounded above by $e^{-c}$ if $l \ge \frac{n}{2} (\log n + c)$.
  \qed}
  \end{example}

  \medskip
  \noindent
  \textbf{B. Inverse $a$-shuffling}.
  Let $a \ge 2$ be an integer and let $w$ be the probability measure on $\fF_\aA$
  of Example \ref{ex:a}, so that the hyperplane walk associated to $w$ is the
  Markov chain of inverse $a$-shuffles on $\sss_n$. Using similar reasoning to the
  one in the case of the Tsetlin library, one can describe the induced chain of
  Proposition \ref{prop:ao} on the set $\AO (\gG)$ as follows: The chain proceeds
  from a given acyclic orientation of $\gG$ by selecting uniformly at random a
  weak ordered partition $B = (B_1, B_2,\dots,B_a)$ of $[n]$ with $a$ blocks.
  Then the orientation of any edge of $\gG$ whose endpoints belong to the same
  block of $B$ is left unchanged and any other edge $\{u, v\}$ of $\gG$ is
  reoriented as $u \to v$, if $i < j$ holds for the unique indices $i$ and $j$ with
  $v \in B_i$ and $u \in B_j$, to reach a new acyclic orientation of $\gG$. We will
  refer to the induced chain as the chain of inverse $a$-shuffles on $\AO (\gG)$.
  Its transition matrix $K^*$ satisfies
      $$ K^* (\oo, \oo') \ = \ \frac{\nu^* (\oo, \oo') }{a^n}$$
  for $\oo, \oo' \in \AO (\gG)$, where $\nu^* (\oo, \oo')$ is the number of weak
  ordered partitions of $[n]$ with $a$ blocks, the action of which on $\oo$, just
  described, results in $\oo'$.

  We denote by $\chi_\gG$ the chromatic polynomial
  \cite[Section 2.3]{Sta3} of $\gG$. Thus for every positive integer $q$,
  $\chi_\gG (q)$ is equal to the number of colorings $\kappa: [n] \to [q]$ of the
  nodes of $\gG$ with $q$ colors satisfying $\kappa (u) \ne \kappa (v)$ for every
  edge $\{u, v\}$ of $\gG$. Part (i) of the following corollary provides an
  interpretation to the coefficients of $\chi_\gG$ which strengthens a theorem of
  Stanley \cite{Sta1} \cite[Corollary 2.3]{Sta3}, stating that the sum of the
  unsigned coefficients of $\chi_\gG$ is equal to the number of acyclic orientations
  of $\gG$. There are other interpretations to these coefficients; see, for
  instance, \cite[Theorem 4.12]{Sta3} and \cite{Ha, Ste}.

  \begin{proposition} \label{prop:a}
  Let $\gG$ be a simple graph on the node set $[n]$ and let $K^*$ be the transition
  matrix of the Markov chain of inverse $a$-shuffles on $\AO (\gG)$.
     \begin{itemize}
       \item[(i)] The matrix $K^*$ is diagonalizable with characteristic
       polynomial given by
       $$\det (xI - K^*) \ = \ \prod_{i=0}^{n-1} \ (x - \frac{1}{a^i})^{p_i},$$
       where
       $$\chi_\gG (q) \ = \ \sum_{i=0}^{n-1} \ (-1)^i \, p_i q^{n-i}$$
       is the chromatic polynomial of $\gG$.
       \item[(ii)] The  stationary distribution $\bar{\pi}$ of $K^*$ is given
       by (\ref{eq:aoupi}).
       \item[(iii)] We have
         \begin{equation} \label{eq:tota}
         \| (K^*_\oo)^l - \bar{\pi} \|_\TV \ \le \ m \left( \frac{1}{a} \right)^l,
         \end{equation}
       where $m$ is the number of edges of $\gG$ and $(K^*_\oo)^l$ is the
       distribution of the chain started from the acyclic orientation $\oo$,
       after $l$ steps.
     \end{itemize}
  \end{proposition}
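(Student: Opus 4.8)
The plan is to obtain all three parts by applying Theorem~\ref{cor:main} to the graphical subarrangement $\bB \subseteq \aA$ corresponding to $\gG$. Indeed, by Proposition~\ref{prop:ao} and the description of the chain given just before the statement, the chain of inverse $a$-shuffles on $\AO(\gG)$ is exactly the chain induced on $\cC_\bB$ from the hyperplane walk on the braid arrangement $\aA$ associated to the measure $w$ of Example~\ref{ex:a}. Since that measure is \emph{separating} (given any edge $\{u,v\}$ of $\gG$, one easily finds, using $a \ge 2$, a positively weighted weak ordered partition placing $u$ and $v$ in different blocks, hence a face not contained in the hyperplane $x_u = x_v$), the hypotheses of Theorem~\ref{cor:main} are satisfied and, in particular, the stationary distribution $\bar{\pi}$ exists.

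The substantive part is (i). I would recall, as in the proof of Proposition~\ref{prop:tsetlin}, that $\lL_\bB$ is isomorphic to the bond lattice $\lL_\gG$ of $\gG$-connected partitions of $[n]$, a subspace $W$ of codimension $i$ corresponding to a partition with $n-i$ blocks; under this isomorphism $\mu_\bB(V,W) = \mu_\gG(\hat{0},\sigma)$. By~(\ref{eq:1/a}) the eigenvalue attached to $W$ is $\lambda_W = 1/a^i$, and since $a \ge 2$ these are pairwise distinct for distinct $i$. Hence Theorem~\ref{cor:main}(i)--(ii) already gives that $K^*$ is diagonalizable with characteristic polynomial $\prod_{i=0}^{n-1} (x - a^{-i})^{p_i}$, where $p_i = \sum_\sigma |\mu_\gG(\hat{0},\sigma)|$, the sum over all $\sigma \in \lL_\gG$ with exactly $n-i$ blocks. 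It then remains only to identify these $p_i$ with the unsigned coefficients of $\chi_\gG$. For this I would use the standard expansion $\chi_\gG(q) = \sum_{\sigma \in \lL_\gG} \mu_\gG(\hat{0},\sigma)\, q^{b(\sigma)}$ of the chromatic polynomial over the bond lattice (see \cite[Section~2.3]{Sta3}), where $b(\sigma)$ denotes the number of blocks of $\sigma$, together with the fact that $\lL_\gG$ is a geometric lattice in which $\sigma$ has rank $n - b(\sigma)$, so that $\mu_\gG(\hat{0},\sigma)$ has sign $(-1)^{n-b(\sigma)}$. Collecting the terms with $b(\sigma) = n-i$ matches $(-1)^i p_i$ with the coefficient of $q^{n-i}$, proving (i).

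Part (ii) is then immediate: by Example~\ref{ex:a} the inverse $a$-shuffle chain on $\sss_n$ has the uniform stationary distribution, so Proposition~\ref{prop:ao}, specifically~(\ref{eq:aoupi}), gives $\bar{\pi}(\oo) = \#\eE(\oo)/n!$. For part (iii) I would invoke the eigenvalue bound~(\ref{eq:tot4}) of Theorem~\ref{cor:main}(iv): the subarrangement $\bB$ has exactly $m$ hyperplanes, one per edge of $\gG$, each of codimension one, hence each contributing $\lambda_H = 1/a$ by~(\ref{eq:1/a}); summing over $H \in \bB$ yields $\| (K^*_\oo)^l - \bar{\pi} \|_\TV \le m(1/a)^l$. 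The only step requiring any thought beyond bookkeeping on top of Theorem~\ref{cor:main} is the coefficient identification in (i), and even that amounts to recalling the classical description of the chromatic polynomial via the bond lattice and the sign alternation of the M\"obius function of a geometric lattice.
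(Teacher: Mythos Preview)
Your proposal is correct and follows essentially the same approach as the paper: both apply Theorem~\ref{cor:main} to the graphical subarrangement $\bB$, invoke~(\ref{eq:1/a}) to get $\lambda_W = 1/a^{\codim(W,V)}$, and identify the multiplicities $p_i$ with the unsigned coefficients of $\chi_\gG$ via the bond lattice (the paper phrases this last step as ``the characteristic polynomial of $\bB$ equals $\chi_\gG$'', which is the same fact you unpack). Parts (ii) and (iii) are handled identically.
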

  \begin{proof}
    Let $\bB$ denote the graphical arrangement corresponding to $\gG$, as usual,
    and $\mu_\bB$ denote the M\"obius function of the intersection poset $\lL_\bB$.
    It follows from Theorem \ref{cor:main} (i) and (\ref{eq:1/a}) that the
    distinct eigenvalues of $K^*$ are $1, 1/a, 1/a^2,\dots,1/a^{n-1}$ and that
    the multiplicity $p_i$ of the eigenvalue $1/a^i$ satisfies
      $$ p_i \ = \ \sum_{W \in \lL_\bB: \, \codim(W,V) = i} \ (-1)^i \, \mu_\bB
         (V, W). $$
    Equivalently, $(-1)^i p_i$ is equal to the coefficient of $q^{n-i}$ in the
    characteristic polynomial \cite[Section 2.3]{OT} \cite[Section 1.3]{Sta3} of
    $\bB$, which is known to equal $\chi_\gG (q)$ \cite[Theorem 2.7]{Sta3}. This
    proves part (i). Part (ii) follows from Proposition \ref{prop:ao}, since the
    chain of inverse $a$-shuffles converges to the uniform distribution on $\sss_n$.
    Part (iii) follows from (\ref{eq:tot4}), since $\lambda_H = 1/a$ for every $H
    \in \bB$.
  \end{proof}

  For the applications discussed in the following example, we think of $\sss_n$
  as the set of linear orderings of a deck of $n$ cards, labeled by the elements
  of $[n]$. Since inverse $a$-shuffling, followed by passing to the inverse of
  the current permutation, gives the same distribution as ordinary $a$-shuffling,
  there is a straightforward translation of our results into the language of
  $a$-shuffles.

  \begin{example} \label{ex:aoa}
    {\rm (i) Suppose that $\gG$ is the star of Example \ref{ex:aostar}, say
    with $n-1$ edges. The chain of inverse $a$-shuffles on $\AO (\gG)$ is the
    process which records the set $S \subseteq [n-1]$ of the labels of cards which
    precede card $n$ in the current linear ordering, in the chain of inverse
    $a$-shuffles on $\sss_n$ (this is stronger than just recording the current
    position of card $n$; see \cite{ADS} for a summary of results on that Markov
    chain).

    Since $\gG$ is a tree, its chromatic polynomial is given by $\chi_\gG (q) =
    q (q-1)^{n-1}$ and hence, by Proposition \ref{prop:a} (i), the matrix $K^*$ has
    eigenvalues $1/a^i$ with multiplicity ${n-1 \choose i}$, for $0 \le i \le n-1$.
    The stationary distribution $\bar{\pi} (S)$ is given by the right-hand side of
    (\ref{eq:pix}), where $m=n-1$ and $k$ is the number of elements of $S$.
    Moreover, (\ref{eq:tota}) gives
       \begin{equation} \label{eq:tot-a-star}
        \| (K^*)^l - \bar{\pi} \|_\TV \ \le \ (n-1) \left( \frac{1}{a} \right)^l
       \end{equation}
    and hence the total variation distance on the left is bounded above by $a^{-c}$
    if $l \ge \log_a n + c$, for $c>0$. This shows a speedup over the $(3/2) \log_a
    n + c$, required for the parent chain of inverse $a$-shuffles on $\sss_n$ to
    reach stationarity, and is essentially sharp by the results of \cite[Section
    2]{ADS}.

    (ii) Suppose that $\gG$ is the path of Example \ref{ex:aopath}. The induced
    chain records the descent set $\Des (\tau^{-1})$ of the inverse of the
    current permutation $\tau$ in the chain of inverse $a$-shuffles on $\sss_n$. The
    stationary distribution is given by (\ref{eq:piDes}). Since the path $\gG$ is
    also a tree, the description of the eigenvalues for the star example and
    (\ref{eq:tot-a-star}) continue to hold. The result on the rate of convergence
    in this case was obtained earlier in \cite[Section 3]{DiFu}, where it is also
    shown that $(1/2) \log_a n + c$ steps are necessary and sufficient for $\Des
    (\tau^{-1})$ to reach stationarity if $n$ is large.

    (iii) Suppose that $\gG$ is the cycle of Example \ref{ex:aocycle}. The induced
    chain now records the cyclic descent set $\cDes (\tau^{-1})$ of the inverse of
    the current permutation $\tau$ in the chain of inverse $a$-shuffles on $\sss_n$.
    It follows, in particular, that this process is a Markov chain on the set of
    proper subsets of $[n]$. As in the previous case, we find that the stationary
    distribution is given by (\ref{eq:picDes}) and that
      $$ \| (K^*)^l - \bar{\pi} \|_\TV \ \le \ n \left( \frac{1}{a} \right)^l, $$
    so that $\| (K^*)^l - \bar{\pi} \|_\TV$ is bounded above by $a^{-c}$ if $l \ge
    \log_a n + c$, for $c>0$. We leave further details to the interested reader.
  \qed}
  \end{example}

  \begin{remark} \label{rem:o-inv}
    {\rm  The difference between ordinary and inverse $a$-shuffles is easy to
    appreciate by considering the graph with a single edge $\{1, 2\}$. Then the
    induced process records the relative order of cards labeled 1 and 2. After
    fewer than $\log_a n$ ordinary $a$-shuffles, there is a good chance (close to
    1) that they are still in their original order. However, their relative order
    is close to random after a growing number of inverse $a$-shuffles. \qed}
  \end{remark}

  \section{Semigroup walks}
  \label{sec:semi}

  The theory of hyperplane walks was generalized to random walks on semigroups by
  Brown \cite{Br1, Br2}. This section shows how our main results can be extended in
  this direction. Some familiarity with the ideas of \cite{Br1, Br2} will be
  assumed. The algebraic aspects of Brown's theory of semigroup walks have been
  further studied in \cite{AM, Stb1, Stb2}, with probabilistic developments in
  \cite{DS}. These references contain examples to which the following theory may be
  applied.

  The \emph{face semigroup} of a hyperplane arrangement $\aA$ is defined as the
  set $\fF_\aA$ of faces of $\aA$, endowed with the product operation discussed in
  Section \ref{sec:back}. The set $\cC_\aA$ of chambers is a left ideal of $\fF_\aA$,
  meaning that it is a nonempty subset which is closed under left multiplication by
  elements of $\fF_\aA$ (of course, $\cC_\aA$ is a right ideal as well). Given a
  finite semigroup $\sS$, a left ideal $\cC$ of $\sS$ and a probability measure $w$
  on $\sS$, one can define a Markov chain on the state space $\cC$ with transition
  matrix $K$ given by
    \begin{equation} \label{eq:semK}
     K(c, c') \ = \ \sum_{x \in \sS: \, xc=c'} \ w(x)
    \end{equation}
  for $c, c' \in \cC$. We refer to this chain as the \emph{semigroup walk} on $\cC$
  associated to $w$; it coincides with the hyperplane walk on $\aA$
  associated to $w$, if $\sS = \fF_\aA$ and $\cC = \cC_\aA$. The semigroup $\sS$ is
  said to be a \emph{band} if $x^2 = x$ for every $x \in \sS$. To such a semigroup,
  one can associate a join semilattice $L$ and a surjective map $\supp: \sS \to L$,
  satisfying
    \begin{equation} \label{eq:supp}
      \supp (x) \le_L \supp (y) \ \Leftrightarrow \ y=yxy
    \end{equation}
  for $x, y \in \sS$; see \cite[Section A.2]{Br2} for further details. The support map
  has the additional property that
    \begin{equation} \label{eq:join}
      \supp (xy) \ = \ \supp (x) \vee \supp (y)
    \end{equation}
  for $x, y \in \sS$, where $u \vee v$ denotes the least upper bound (join) of $u$
  and $v$ in $L$. In the special case of a face semigroup $\fF_\aA$, the support
  of a face $F \in \fF_\aA$ is the linear span of $F$ and the semilattice $L$ is
  the dual of the intersection poset $\lL_\aA$, defined in Section \ref{sec:back}.
  A band $\sS$ is called \emph{left-regular} if $xyx = xy$ for all $x, y \in \sS$.

  Assume that $\sS$ is a finite band. Then $L$ is a finite join semilattice and
  hence it has a maximum element, denoted $\hat{1}$. It follows from (\ref{eq:join})
  that the set $\cC_\sS = \{c \in \sS: \supp (c) = \hat{1}\}$ is a left ideal of
  $\sS$. The elements of $\cC_\sS$ are called \emph{chambers}. Given $x \in \sS$, the
  subsemigroup $x \sS$ is a finite band whose number of chambers depends only on
  the support $u = \supp (x)$ of $x$ in $L$; see \cite[Section B.3]{Br2}. We denote
  this number by $n(u)$. The following theorem was proved for left-regular bands in
  \cite{Br1} and generalized to all bands in \cite{Br2}.

  \begin{theorem} \label{thm:Brown}
    Let $\sS$ be a finite band with corresponding semilattice $L$ and set of
    chambers $\cC_\sS$. Let $w$ be a probability measure on $\sS$ and let $K$ be the
    transition matrix of the semigroup walk on $\cC_\sS$ associated to $w$.
     \begin{itemize}
        \item[(i)] The characteristic polynomial of $K$ is given by
        \begin{equation} \label{eq:sem-char}
          \det (xI - K) \ = \ \prod_{u \in L} \ (x - \lambda_u)^{m_u},
        \end{equation}
        where
          \begin{equation} \label{eq:sem-lambdaW}
            \lambda_u \ = \ \sum_{x \in \sS: \ \supp (x) \le_L u} \ w(x)
          \end{equation}
        is an eigenvalue,
        \begin{equation} \label{eq:sem-mu}
          m_u \ = \ \sum_{u \le_L v} \ \mu_L (u, v) \, n(v),
        \end{equation}
        $\mu_L$ is the M\"obius function of $L$ and $n(v)$ is the number of chambers
        of $x \sS$ for any $x \in \sS$ with $\supp (x) = v$.

        \item[(ii)] The matrix $K$ is diagonalizable.
        \item[(iii)] If the set $\{x \in \sS: w(x) > 0\}$ generates $\sS$, then
        $K$ has a unique stationary distribution $\pi$ and
          \begin{equation} \label{eq:sem-tot}
          \| K_c^l - \pi \|_\TV \ \le \ P \{x_1 x_2 \cdots x_l \not\in \cC_\sS \}
          \ \le \ \sum_{u} \ \lambda_u^l,
          \end{equation}
        where $K_c^l$ is the distribution of the chain started from $c \in \cC_\sS$
        after $l$ steps, $(x_1, x_2,\dots)$ consists of independent and identically
        distributed picks from the measure $w$ on $\sS$ and $u$ runs through the set
        of elements of $L$ covered by $\hat{1}$.
     \end{itemize}
  \end{theorem}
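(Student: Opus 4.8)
The plan is to prove (i) and (iii) by carrying over, essentially verbatim, the two new arguments of Section~\ref{sec:main}: the trace computation feeding Lemma~\ref{lem:eigen} for the eigenvalue statement, and the coupling argument for convergence. Part~(ii), diagonalizability, is not accessible by the trace method and I would simply cite Brown's semigroup-algebra argument \cite{Br1, Br2}; the support structure (\ref{eq:supp}), (\ref{eq:join}) is what makes all of this go through in the band generality.

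For (i), by Lemma~\ref{lem:eigen} it suffices to check $\tr(K^l) = \sum_{u \in L} m_u \lambda_u^l$ for every $l \ge 1$. Expanding from (\ref{eq:semK}) exactly as in (\ref{eq:f(K,l)}) and grouping the length-$l$ sequences by their product $x = x_1 x_2 \cdots x_l \in \sS$, one gets $\tr(K^l) = \sum_{x \in \sS} \#\{c \in \cC_\sS : xc = c\} \cdot W_l(x)$, where $W_l(x) = \sum w(x_1) \cdots w(x_l)$ runs over factorizations $x_1 \cdots x_l = x$. The combinatorial heart is the identity $\#\{c \in \cC_\sS : xc = c\} = n(\supp x)$: since $\sS$ is a band, $xc = c$ iff $c \in x\sS$, and by (\ref{eq:join}) the band $x\sS$ has structure semilattice the principal up-set of $\supp x$ in $L$ — in particular the same top $\hat{1}$ — so $\cC_\sS \cap x\sS$ is precisely the chamber set of $x\sS$, of size $n(\supp x)$. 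This is the band-theoretic form of (\ref{eq:FC}). Now group the $x$'s by $v = \supp x$ and use (\ref{eq:join}) to replace $\supp(x_1 \cdots x_l)$ by $\supp x_1 \vee \cdots \vee \supp x_l$; summing $W_l$ over all sequences whose joined support is $\le_L v$ gives $(\sum_{\supp x \le_L v} w(x))^l = \lambda_v^l$ by (\ref{eq:sem-lambdaW}), so Möbius inversion over $L$ extracts the ``joined support exactly $v$'' weights as $\sum_{u \le_L v} \mu_L(u,v) \lambda_u^l$. Substituting and interchanging the two sums yields $\tr(K^l) = \sum_u \lambda_u^l \sum_{v \ge_L u} \mu_L(u,v) n(v) = \sum_u m_u \lambda_u^l$ by (\ref{eq:sem-mu}), and Lemma~\ref{lem:eigen} (together with the standard count $\sum_{u \in L} m_u = |\cC_\sS|$) finishes (i).

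For (iii), I would run two copies $(X_i)$, $(Y_i)$ of the chain on $\cC_\sS$ driven by a common i.i.d.\ sequence $x_1, x_2, \dots$ from $w$, so $X_i = x_i X_{i-1}$ and $Y_i = x_i Y_{i-1}$. Let $T$ be the first time $l$ at which $x_1 x_2 \cdots x_l \in \cC_\sS$, equivalently (by (\ref{eq:join})) the first time $\supp x_1 \vee \cdots \vee \supp x_l = \hat{1}$. A chamber of a band is left absorbing (immediate for a left-regular band from (\ref{eq:supp}) and $xyx = xy$; in general this is Brown's band structure theory), so at time $T$ both chains equal $x_1 \cdots x_T$ and agree thereafter. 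Running this from two arbitrary starts gives $\|K_c^l - K_{c'}^l\|_\TV \le P(x_1 \cdots x_l \notin \cC_\sS)$; when $\{x : w(x) > 0\}$ generates $\sS$ some finite product of generators is a chamber, so this probability decreases monotonically to $0$, giving existence and uniqueness of $\pi$. The same coupling against $Y_0 \sim \pi$ then gives the first bound of (\ref{eq:sem-tot}), and a union bound over the coatoms $u$ of $L$, each contributing $P(\supp x_i \le_L u$ for all $i \le l) = \lambda_u^l$ by (\ref{eq:sem-lambdaW}), gives the second.

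The trace expansion and the Möbius inversion are routine once the identification $\#\{c : xc = c\} = n(\supp x)$ is in place, so the only real obstacle is the non-left-regular case: when $\sS$ is a band that is not left-regular a chamber need not act as a left zero, and then both the clean coupling above and even the correct phrasing of convergence require importing the structure theory of finite bands from \cite{Br2}. For left-regular bands — which already cover the face semigroups and every example in this paper — the argument sketched here is self-contained, and part~(ii) must in any case be taken from \cite{Br1, Br2}.
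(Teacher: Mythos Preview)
Your proposal is correct and matches the paper's own treatment: the paper states Theorem~\ref{thm:Brown} by citation to \cite{Br1, Br2} and then, in Remark~\ref{rem:semproofs}, sketches exactly the trace-and-M\"obius argument for (i) (via the identity $\#\{c \in \cC_\sS : xc = c\} = n(\supp x)$, which the paper phrases as equation~(\ref{eq:sem-FC}) together with M\"obius inversion on $n(v) = \sum_{v \le_L u} m_u$) and defers the coupling proof of (iii) to the reader, while part~(ii) is likewise left to Brown. Your acknowledgment of the left-regular versus general-band subtlety in the coupling step is, if anything, more careful than the paper's remark.
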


  The results of Section \ref{sec:main} can be extended to this setting as follows.
  Let $\phi: \sS_\aA \to \sS_\bB$ be an epimorphism of finite semigroups, meaning that
  $\phi$ is a surjective map which satisfies $\phi(xy) = \phi(x) \phi(y)$ for all $x, y
  \in \sS_\aA$. Given a left ideal $\cC$ of $\sS_\aA$ and a probability measure $w$ on
  $\sS_\aA$, the semigroup walk (\ref{eq:semK}) on $\cC$ associated to $w$ induces a
  stochastic process on the state space $\phi(\cC)$, in the sense of
  Section \ref{sec:functions}. Since $\phi$ is surjective, the image $\phi(\cC)$ is a left
  ideal of $\sS_\bB$. This setup generalizes that of the map $f: \fF_\aA \to \fF_\bB$ of
  face semigroups of Section \ref{sec:main}, where $\bB$ is a subarrangement of a
  hyperplane arrangement $\aA$ and $f(F)$ is the unique face of $\bB$ which contains $F$,
  for $F \in \fF_\aA$. The following proposition generalizes Corollary \ref{cor:sub}.
    \begin{proposition} \label{prop:sem}
      Let $\phi: \sS_\aA \to \sS_\bB$ be an epimorphism of semigroups, $\cC \subseteq
      \sS_\aA$ be a left ideal and $w$ be a probability measure on $\sS_\aA$. For every
      starting distribution on $\cC$, the stochastic process on $\phi(\cC)$ which is
      induced from the semigroup walk on $\cC$ associated to $w$ by the map $\phi$ is
      Markov. Moreover, this induced chain is itself a semigroup walk on $\phi(\cC)$,
      with associated probability measure $w^*$ on $\sS_\bB$ defined by
        \begin{equation} \label{eq:sem-w*}
          w^* (z) \ = \ \sum_{x \in \sS_\aA: \, \phi(x) = z} \ w(x).
        \end{equation}
    \end{proposition}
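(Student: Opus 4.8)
The plan is to follow the proof of Corollary~\ref{cor:sub} almost line for line, with the semigroup epimorphism $\phi$ playing the role taken there by the map $F \mapsto \overline{F}$ sending a face of $\aA$ to the face of $\bB$ containing it. Write $f = \phi|_\cC \colon \cC \to \phi(\cC)$ for the restriction of $\phi$; this is surjective by construction, and $\phi(\cC)$ is a left ideal of $\sS_\bB$ (as already observed in the text preceding the statement), so that the semigroup walk on $\phi(\cC)$ associated to any probability measure on $\sS_\bB$ is well defined and the conclusion of the proposition makes sense.

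First I would verify Dynkin's criterion (Lemma~\ref{lem:markov}) for the partition of $\cC$ into the fibers of $f$. Fixing distinct blocks $d, d' \in \phi(\cC)$ and any $c \in \cC$ with $\phi(c) = d$, I would substitute the definition~(\ref{eq:semK}) of $K$ and interchange the order of summation to obtain
\[
\sum_{c' \in \cC \colon \phi(c') = d'} K(c, c') \ = \ \sum_{x \in \sS_\aA \colon \phi(xc) = d'} w(x).
\]
Since $\phi$ is a homomorphism, $\phi(xc) = \phi(x)\phi(c) = \phi(x)\,d$, so the right-hand side equals $\sum_{x \in \sS_\aA \colon \phi(x)\,d = d'} w(x)$, which is manifestly independent of the choice of $c$. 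This is the exact analogue of the identity $\overline{FC} = \overline{F}\,\overline{C}$ used in Proposition~\ref{prop:sub}; it is the only point in the argument where anything beyond routine bookkeeping is needed, and even here the multiplicativity of $\phi$ does all the work. By Lemma~\ref{lem:markov}, the induced process on $\phi(\cC)$ is therefore Markov for every starting distribution on $\cC$.

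It then remains to identify its transition matrix $K^*$ as that of a semigroup walk. From the display above, $K^*(d, d') = \sum_{x \in \sS_\aA \colon \phi(x)\,d = d'} w(x)$ for $d, d' \in \phi(\cC)$. Grouping the terms according to the value $z = \phi(x) \in \sS_\bB$, and noting that $\phi(x)\,d = d'$ is equivalent to $z\,d = d'$ when $z = \phi(x)$, this rewrites as
\[
K^*(d, d') \ = \ \sum_{z \in \sS_\bB \colon z\,d = d'} \Bigl( \sum_{x \in \sS_\aA \colon \phi(x) = z} w(x) \Bigr) \ = \ \sum_{z \in \sS_\bB \colon z\,d = d'} w^*(z),
\]
with $w^*$ the measure defined in~(\ref{eq:sem-w*}). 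Comparing with~(\ref{eq:semK}) applied to the semigroup $\sS_\bB$, the left ideal $\phi(\cC)$ and the measure $w^*$, this is precisely the transition matrix of the semigroup walk on $\phi(\cC)$ associated to $w^*$; I would also note in passing that $w^*$ is indeed a probability measure, since $\sum_{z \in \sS_\bB} w^*(z) = \sum_{x \in \sS_\aA} w(x) = 1$. I do not anticipate any genuine obstacle: the proof is a direct transcription of that of Corollary~\ref{cor:sub}, the only structural input being that $\phi(\cC)$ is a left ideal of $\sS_\bB$, and the only algebraic input being that $\phi$ respects the semigroup product.
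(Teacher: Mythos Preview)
Your proof is correct and is exactly what the paper has in mind: its own proof simply says ``This follows by computations similar to those in the proofs of Proposition~\ref{prop:sub} and Corollary~\ref{cor:sub},'' and you have faithfully carried out those computations with $\phi$ in place of the map $F \mapsto \overline{F}$.
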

    \begin{proof}
    This follows by computations similar to those in the proofs of Proposition
    \ref{prop:sub} and Corollary \ref{cor:sub}.
    \end{proof}

  Suppose now that $\phi: \sS_\aA \to \sS_\bB$ is an epimorphism of finite bands. The
  definition of the support semilattice in \cite[Section A.2]{Br2} and (\ref{eq:supp})
  imply that $\phi$ induces an order preserving, surjective map $\phi_\ast: L_\aA \to
  L_\bB$ of the associated semilattices which makes the diagram
  $$ \begin{CD}
       \sS_\aA   @>{\phi}>>  \sS_\bB  \\
       @V{\supp_\aA}VV                              @VV{\supp_\bB}V     \\
       L_\aA     @>{\phi_\ast}>>              L_\bB    &  \\
  \end{CD} $$
  commute, where $\supp_\aA$ and $\supp_\bB$ are the support maps of $\sS_\aA$ and
  $\sS_\bB$, respectively. We denote by $\cC_\aA$ and $\cC_\bB$ the set of chambers of
  $\sS_\aA$ and $\sS_\bB$, respectively. Since $\cC_\aA$ is a left ideal of $\sS_\aA$,
  the image $\phi(\cC_\aA)$ is a left ideal of $\sS_\bB$.
    \begin{lemma} \label{lem:phi}
      Let $\sS_\aA$ and $\sS_\bB$ be finite bands with sets of chambers $\cC_\aA$ and
      $\cC_\bB$, respectively, and let $\phi: \sS_\aA \to \sS_\bB$ be an epimorphism
      of semigroups.
        \begin{itemize}
        \item[(i)] We have $\phi(\cC_\aA) \subseteq \cC_\bB$.

        \item[(ii)] If $\sS_\aA$ is left-regular, then $\phi(\cC_\aA) = \cC_\bB$.
        \end{itemize}
    \end{lemma}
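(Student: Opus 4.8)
My plan is to reduce everything to the characterization (\ref{eq:supp}) of the support map together with the join property (\ref{eq:join}), without appealing to any finer structure of bands. For part (i), I would fix $c \in \cC_\aA$ and show directly that $\supp_\bB(\phi(c))$ is the top element $\hat{1}$ of $L_\bB$. Given an arbitrary $b \in \sS_\bB$, surjectivity of $\phi$ lets me write $b = \phi(a)$ with $a \in \sS_\aA$; since $\supp_\aA(a) \le_{L_\aA} \hat{1} = \supp_\aA(c)$, (\ref{eq:supp}) gives $c = cac$, and applying $\phi$ yields $\phi(c) = \phi(c)\,b\,\phi(c)$, whence $\supp_\bB(b) \le_{L_\bB} \supp_\bB(\phi(c))$ by (\ref{eq:supp}) again. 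As $b$ is arbitrary and $\supp_\bB$ is onto $L_\bB$, this forces $\supp_\bB(\phi(c)) = \hat{1}$, i.e. $\phi(c) \in \cC_\bB$. Alternatively, one may simply note that the induced map $\phi_\ast : L_\aA \to L_\bB$, being order preserving and surjective, sends $\hat{1}$ to $\hat{1}$, and then read off $\supp_\bB(\phi(c)) = \phi_\ast(\supp_\aA(c)) = \hat{1}$ from the commutative square.

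For part (ii) I would first observe that $\sS_\bB = \phi(\sS_\aA)$ is again a left-regular band: for $x, y \in \sS_\aA$ we have $\phi(x)\phi(y)\phi(x) = \phi(xyx) = \phi(xy) = \phi(x)\phi(y)$. The crucial consequence of left-regularity I intend to use is an absorption property of chambers: if $d \in \cC_\bB$ and $e \in \sS_\bB$ is arbitrary, then $\supp_\bB(e) \le_{L_\bB} \hat{1} = \supp_\bB(d)$ gives $d = ded$ by (\ref{eq:supp}), and left-regularity collapses this to $d = de$; hence any relation of the form $de = e$ forces $d = e$.

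Now, given $d \in \cC_\bB$, I would pick $x \in \sS_\aA$ with $\phi(x) = d$ (surjectivity) and any chamber $c_0 \in \cC_\aA$ (which exists, e.g. as a product of all elements of $\sS_\aA$, by iterating (\ref{eq:join}) and using surjectivity of $\supp_\aA$, or simply because $\cC_\aA$ is by definition a nonempty left ideal), and set $c := x c_0$. By (\ref{eq:join}), $\supp_\aA(c) = \supp_\aA(x) \vee \supp_\aA(c_0) = \supp_\aA(x) \vee \hat{1} = \hat{1}$, so $c \in \cC_\aA$; moreover $xc = x^2 c_0 = x c_0 = c$ since $\sS_\aA$ is a band. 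Applying $\phi$, $\phi(c) = \phi(xc) = \phi(x)\phi(c) = d\,\phi(c)$, so by the absorption property (with $e = \phi(c)$) we conclude $d = \phi(c) \in \phi(\cC_\aA)$. As $d \in \cC_\bB$ was arbitrary, $\cC_\bB \subseteq \phi(\cC_\aA)$, and together with part (i) this gives $\phi(\cC_\aA) = \cC_\bB$.

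The one genuinely delicate point is the absorption property used in part (ii): it is exactly where left-regularity, rather than merely the band axiom, is invoked, since in a general band the passage from $ded$ to $de$ is not available — and correspondingly I expect $\phi(\cC_\aA) = \cC_\bB$ to fail without that hypothesis. Everything else is a short and mechanical manipulation of the identities (\ref{eq:supp})--(\ref{eq:join}) and the homomorphism property of $\phi$.
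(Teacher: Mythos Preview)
Your proof is correct and follows essentially the same strategy as the paper's. Part (i) is identical to the paper's argument. For part (ii), the paper phrases things via the partial order $x \preceq y \Leftrightarrow xy = y$ on a left-regular band (citing \cite{Br1} for the fact that this is a partial order whose maximal elements are the chambers), picks any $c \in \cC_\aA$ with $x \preceq_\aA c$, and uses maximality of $d$ in $\preceq_\bB$; your version unpacks this citation, constructing $c$ explicitly as $xc_0$ and deriving the maximality statement (your ``absorption property'' $d = de$) directly from (\ref{eq:supp}) and left-regularity. The underlying argument is the same; your presentation is simply more self-contained.
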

    \begin{proof}
    To prove (i), suppose that $c \in \cC_\aA$. Then we have $\supp_\aA (x) \le \supp_\aA
    (c)$ in $L_\aA$ for every $x \in \sS_\aA$. By (\ref{eq:supp}), this means that $c = cxc$
    holds in $\sS_\aA$ for every $x \in \sS_\aA$. Since $\phi$ is an epimorphism of
    semigroups, it follows that $\phi(c) = \phi(c)z\phi(c)$ for every $z \in \sS_\bB$. By
    reversing the first part of the argument, we conclude that $\phi(c) \in \cC_\bB$.

    Assume now that $\sS_\aA$ is left-regular. It was shown in \cite[Sections 2.2 and
    B.3]{Br1} that the relation $\preceq_\aA$, defined by letting $x \preceq_\aA y
    \Leftrightarrow xy = y$ for $x, y \in \sS_\aA$, is a partial order on $\sS_\aA$ and
    that the chambers of $\sS_\aA$ are precisely the maximal elements of $\preceq_\aA$.
    Similar remarks hold for the band $\sS_\bB$, which is also left-regular as a
    homomorphic image of $\sS_\aA$. To prove
    (ii), suppose that $d \in \cC_\bB$ and let $x \in \sS_\aA$ be such that $\phi (x) = d$.
    Then there exists $c \in \cC_\aA$ such that $x \preceq_\aA c$. Clearly, the map $\phi:
    \sS_\aA \to \sS_\bB$ is order preserving and hence $d \preceq_\bB \phi(c)$. Since $d
    \in \cC_\bB$ is maximal in $\preceq_\bB$, we must have $d = \phi(c)$. This shows that
    $d \in \phi(\cC_\aA)$ and hence that $\cC_\bB \subseteq \phi(\cC_\aA)$. In view of part
    (i), it follows that $\phi(\cC_\aA) = \cC_\bB$.
    \end{proof}

  Lemma \ref{lem:phi} implies that if $\phi: \sS_\aA \to \sS_\bB$ is an epimorphism of
  finite left-regular bands and $\cC = \cC_\aA$, then the induced Markov chain of
  Proposition \ref{prop:sem} is a semigroup walk on the state space $\cC_\bB$ of chambers
  of $\sS_\bB$. Thus all conclusions of Theorem \ref{thm:Brown} apply to the induced chain.
  We leave it to the reader to formulate the exact analogue of Theorem \ref{cor:main} in
  this situation and end with a remark on the rest of the material of Section
  \ref{sec:main}.

  \begin{remark} \label{rem:semproofs}
    {\rm The proofs of parts (i) and (iv) of Theorem \ref{thm:BHR-BD}, given in
    Section \ref{sec:main}, extend easily in the setup of Theorem \ref{thm:Brown}.
    For part (i), for instance, one should replace (\ref{eq:FC}) by the equality
      \begin{equation} \label{eq:sem-FC}
        \# \{c \in \cC_\sS: xc = c\} \ = \sum_{u \in L: \ \supp(x) \le_L u} \ m_u
      \end{equation}
    for $x \in \sS$, where the $m_u$ are defined by (\ref{eq:sem-mu}). A computation
    similar to that given in the proof of Theorem \ref{thm:BHR-BD} (i) in Section
    \ref{sec:main}, equation (\ref{eq:join}) and a slight variant of Lemma
    \ref{lem:eigen} then imply that the $m_u$ are necessarily nonnegative integers
    and that (\ref{eq:sem-char}) holds. To check the validity of (\ref{eq:sem-FC}),
    we observe that the set $\{c \in \cC_\sS: xc = c\}$ is equal to the set of chambers
    of $x \sS$ (see, for instance, \cite[Example A.13]{Br2}). Thus the left-hand side
    of (\ref{eq:sem-FC}) equals $n(v)$, where $v = \supp (x) \in L$, and hence
    (\ref{eq:sem-FC}) is equivalent to
      $$ n(v) \ = \ \sum_{v \le_L u} \ m_u $$
    for $v \in L$. This is in turn equivalent to (\ref{eq:sem-mu}) by M\"obius
    inversion on $L$.
    We leave the details of the coupling proof of (\ref{eq:sem-tot}) to the interested
    reader.
    \qed}
  \end{remark}

  \section*{Acknowledgements}
  Athanasiadis was partially supported by the 70/4/8755 ELKE Research Fund of the
  University of Athens. Diaconis was partially supported by NSF grant DMS-0505673.

  \end{document}